\numberwithin{equation}{section}
\newcommand{\R}{{\mat R}}
\newcommand{\C}{{\mat C}}
\newcommand{\be}{\begin{eqnarray}}
\newcommand{\ben}{\begin{eqnarray*}}
\newcommand{\en}{\end{eqnarray}}
\newcommand{\enn}{\end{eqnarray*}}
\newcommand{\mat}{\mathbb}
\newtheorem{theorem}{Theorem}[section]
\newtheorem{lemma}[theorem]{Lemma}
\newtheorem{remark}[theorem]{Remark}
\DeclareMathOperator{\dif}{d\!}
\DeclareMathOperator{\supp}{\mathrm{supp}}
\renewcommand{\Re}{\operatorname{Re}}
\begin{document}
\renewcommand{\theequation}{\arabic{section}.\arabic{equation}}

\title{\bf Uniqueness to inverse acoustic and elastic medium scattering problems with hyper-singular source method}

\author{\bf Chun Liu\thanks{School of Mathematical Sciences and LPMC, Nankai University, Tianjin, 300071, China (liuchun@nankai.edu.cn, ghhu@nankai.edu.cn)},
Guanghui Hu$^*$,
Jianli Xiang\thanks{Three Gorges Mathematical Research Center, College of Science, China Three Gorges University, Yichang, 443002, China (xiangjianli@ctgu.edu.cn)}
and Jiayi Zhang\thanks{Corresponding author. School of Mathematical Sciences and LPMC, Nankai University, Tianjin, 300071, China (zhangjy97@mail.nankai.edu.cn)}}

\date{}

\maketitle


\begin{abstract}
  This paper is concerned with inverse scattering problems of determining the support of an isotropic and homogeneous penetrable body from knowledge of multi-static far-field patterns in acoustics and in linear elasticity. The normal derivative of the total fields admits no jump on the interface of the scatterer in the trace sense. If the contrast function of the refractive index function or the density function has a positive lower bound near the boundary, we propose a hyper-singular source method to prove uniqueness of inverse scattering with all incoming plane waves at a fixed energy. It is based on subtle analysis on the leading part of the scattered field when hyper-singular sources caused by the first derivative of the fundamental solution approach to a boundary point. As a by-product, we show that this hyper-singular method can be also used to determine the boundary value of a H\"older continuous refractive index function in acoustics or a H\"older continuous density function in linear elasticity.
  \vspace{.2in}

  {\bf Keywords:} Inverse medium scattering, Helmholz equation, Navier equation, uniqueness.

\end{abstract}

\maketitle
\section{Introduction}
\subsection{Acoustic medium scattering problem}
Consider a time-harmonic acoustic plane wave incident onto a bounded penetrable obstacle $D \subset \mathbb{R}^{3}$ embedded in a homogeneous isotropic medium. The spatially-dependent incident field $u^{{\rm in}}(x,d)$ takes the form $u^{{\rm in}}(x,d) = e^{ikx \cdot d}, \ x \in \mathbb{R}^{3}$, where $d \in \mathbb{S}^{2} \coloneqq\{x \in \mathbb{R}^{3} \colon \vert x \vert = 1\}$ is the incident direction and $k > 0$ is the wave number. We assume that the complement of $D$, $D^{e} \coloneqq \mathbb{R}^{3} \backslash \bar{D}$, is connected and $D$ is of $C^2$-smoothness. The acoustic properties of the scatterer can be described by the refractive index function $n(x)$ such that $n(x)=1$ in $D^{e}$ after some normalization. Hence the contrast function $1 - n(x)$ is compactly supported in $D$. The wave propagation is modeled by the Helmholtz equation
\begin{equation} \label{eq:helmholtz}
\Delta u + k^{2}\,n\,u = 0 \text{ in } \mathbb{R}^{3}.
\end{equation}
In (\ref{eq:helmholtz}), $u = u^{{\rm in}} + u^{{\rm sc}}$ denotes the total field, where
$u^{{\rm sc}}$ is the perturbed scattered field satisfying the Sommerfeld radiation
condition
\begin{equation} \label{eq:sommerfeld}
\lim_{\vert x\vert \to \infty} r \bigg\{\frac{\partial u^{{\rm sc}}}{\partial r} - ik u^{{\rm sc}}\bigg\} = 0,\ r = \vert x\vert.	
\end{equation}
The Sommerfeld radiation condition (\ref{eq:sommerfeld}) leads to the asymptotic expansion
\begin{equation*} \label{eq:farfield}
u^{{\rm sc}}(x) = \frac{e^{ik\vert x\vert}}{\vert x\vert}u^{\infty}(\hat{x}) + \mathcal{O}\left(\frac{1}{\vert x\vert^{3/2}}\right),\ \vert x\vert \to +\infty,
\end{equation*}
uniformly in all directions $\hat{x} \coloneqq x/\vert x\vert$, $x \in \mathbb{R}^{3}$. The function $u^{\infty}(\hat{x})$ is an analytic function defined on $\mathbb{S}^{2}$ and is referred to as the far-field pattern or the scattering amplitude. The vector $\hat{x} \in \mathbb{S}^{2}$ is called the observation direction of the far field. We also need transmission conditions on the interface $\partial D$. In this paper we assume the {\em continuity} of the total field and its normal derivative in the trace sense that,
\begin{equation} \label{eq:boundarycondition}
	u^{+} = u^{-},\ \partial_{\nu} u^{+} = \partial_{\nu} u^{-} \text{ on } \partial D.
\end{equation}
Here $\nu \in \mathbb{S}^{2}$ is the unit normal vector on $\partial D$ pointing into $D^{e}$, and the superscripts $(\cdot)^{\pm}$ stand for the limits taken from outside and inside, respectively.

The direct scattering problem for (\ref{eq:helmholtz}), (\ref{eq:sommerfeld}) and (\ref{eq:boundarycondition}) can be formulated as a task of finding the scattered field $u^{{\rm sc}}$ when the incident field $u^{{\rm in}}$ and $n$ are given. If the refractive index $n\in C(\overline{D})$ satisfies $\Re n>0$ and $\Im n\geq0$, it was shown in \cite[Theorems 8.1 and 8.3]{colton.kress} that the direct scattering problem admits a unique solution $u^{sc}\in H_{{\rm loc}}^{2}(\mathbb{R}^{3}).$



\subsection{Elastic medium scattering problem}
We also consider an isotropic and inhomogeneous elastic medium in three dimensions. The medium can be described by two Lam\'e coefficients $\lambda$ and $\mu$, and the mass density $\rho$. We simplify the discussion further by supposing that $\lambda$ and $\mu$ are constants satisfying $\mu > 0$ and $2\mu+ 3\lambda > 0$. We also assume that the density $\rho(x)\in L^{\infty}(\mathbb{R}^{3})$ satisfies $\rho(x)=1$ in $D^c:= \mathbb{R}^3\backslash \overline{D}$. Then the vector field $\mathbf{u}=(u_1,u_2,u_3)$ is governed by the following Lam\'e system
\begin{equation}\label{LS}
\Delta^*\mathbf{u}+\omega^2\rho(x)\mathbf{u}=0 \ \mathrm{in} \  \mathbb{R}^3,\ \ \Delta^*\mathbf{u}:=\mu\Delta \mathbf{u}+(\lambda+\mu)\mathrm{grad}\ \mathrm{div}\ \mathbf{u}.
\end{equation}
Here $\omega>0$ is a fixed frequency. Moreover, we assume the exterior $D^c$ is connected. $\mathbf{u}=\mathbf{u}^{{\rm in}}+\mathbf{u}^{{\rm sc}}$ is the total field and $\mathbf{u}^{{\rm sc}}$ is the scattered field. $\mathbf{u}^{{\rm sc}}$ in $D^c$ can be decomposed into the sum of the compressional (longitudinal) part $\mathbf{u}_p^{{\rm sc}}$ and shear (transversal) part $\mathbf{u}_s^{{\rm sc}}$ as follows
\begin{equation}\label{ups}
\mathbf{u}^{{\rm sc}}=\mathbf{u}_p^{{\rm sc}}+\mathbf{u}_s^{{\rm sc}},\ \mathbf{u}_p^{{\rm sc}}=\frac{1}{k_p^2}\mathrm{grad}\ \mathrm{div}\ \mathbf{u}^{{\rm sc}},\ \ \mathbf{u}_s^{{\rm sc}}=\frac{1}{k_s^2}\mathrm{curl}\ \mathrm{curl}\ \mathbf{u}^{{\rm sc}},
\end{equation}
in which
\begin{equation}
\nonumber
k_s:=\omega\sqrt{\frac{\rho}{\mu}},\ \ k_p:=\omega\sqrt{\frac{\rho}{\lambda+2\mu}}
\end{equation}
denote the shear and compressional wave numbers, respectively. It follows from the decomposition in (\ref{ups}) that $\mathbf{u}_p^{{\rm sc}}$ and $\mathbf{u}_s^{{\rm sc}}$ satisfy the Helmholtz equations
\begin{equation}
\nonumber
 (\Delta+k^2_{\alpha})\mathbf{u}_{\alpha}^{{\rm sc}}=0 \ \mathrm{in}\ D^c,\ \ \alpha=p,s.
\end{equation}
The scattered field is required to satisfy the Kupradze radiation condition(see e.g.\cite{Kupradze})
\begin{equation}\label{rd}
\lim\limits_{r\rightarrow \infty}r\left(\frac{\partial \mathbf{u}_{\alpha}^{{\rm sc}}}{\partial r}- ik_{\alpha}\mathbf{u}_{\alpha}^{{\rm sc}}\right)=0, \ \ r=|x|,\ \alpha=p,s,
\end{equation}
uniformly with respect to all directions $\hat{x}=x/|x|\in \mathbb{S}^2:=\{x \in \mathbb{R}^3: |x|=1\}$. The radiation conditions in (\ref{rd}) lead to the P-part $u_p^{\infty}$ and S-part $u_s^{\infty}$ of the far-field pattern of $u^{{\rm sc}}$, given by the asymptotic behavior
\begin{equation*}\label{far}
\mathbf{u}_s^{{\rm sc}}(x)=\frac{\mathrm{exp}(ik_p |x|)}{|x|}\mathbf{u}_p^{\infty}(\hat{x})+\frac{\mathrm{exp}(ik_s |x|)}{|x|}\mathbf{u}_s^{\infty}(\hat{x})+\mathcal{O}\left(\frac{1}{|x|^2}\right),\ |x|\rightarrow \infty,
\end{equation*}
where $\mathbf{u}_p^{\infty}$ and $\mathbf{u}_s^{\infty}$ are the far-field part of $\mathbf{u}_p^{{\rm sc}}$ and $\mathbf{u}_s^{{\rm sc}}$, respectively. In this paper, we define the far-field pattern $\mathbf{u}^{\infty}$ of the scattered field $u^{{\rm sc}}$ as the sum of $\mathbf{u}_p^{\infty}$ and $\mathbf{u}_s^{\infty}$, which means $\mathbf{u}^{\infty}:= \mathbf{u}_p^{\infty}+\mathbf{u}_s^{\infty} $. The transmission conditions on the scatterer's surface $\partial D$ are supposed to be
\begin{equation}\label{boundary}
\mathbf{u}^{+}=\mathbf{u}^{-},\ T\mathbf{u}^{+}= T\mathbf{u}^{-},\ \text{ on } \partial D,
\end{equation}
where
\begin{equation}
T\mathbf{u}:=2\mu \nu\cdot\nabla \mathbf{u}+\lambda \nu \nabla\cdot \mathbf{u}+\mu\nu\times(\nabla\times \mathbf{u})
\end{equation}
is the surface stress operator in terms of the outwardly directed unit normal vector $\nu$. Again the superscripts $(\cdot)^{\pm}$ stand for the limits taken from outside and inside, respectively. The incident wave $\mathbf{u}^{{\rm in}}$ is supposed to an elastic plane wave of the form
\begin{equation*}\label{plane}
\mathbf{u}^{{\rm in}}(x,d,q)=\mathbf{u}_p^{{\rm in}}(x,d)+\mathbf{u}_s^{{\rm in}}(x,d,q)
\end{equation*}
where
\begin{equation*}
\mathbf{u}_p^{{\rm in}}(x,d)=d\ \exp(ik_px\cdot d),\ \  \mathbf{u}_s^{{\rm in}}(x,d,q)=q\ \exp(ik_sx\cdot d)
\end{equation*}
with $d$ being the unit incident direction vector and $q$ being the unit polarization vector satisfying $d\cdot q=0 $.

The direct scattering problem for (\ref{LS})-(\ref{rd}) is to find the scattered field $\mathbf{u}^{{\rm sc}}$ when the incident field $\mathbf{u}^{{\rm in}}$ and density $\rho$ are given. For a real valued $\rho \in \mathit{C}^{1,\gamma}( \mathbb{R}^3)$ with $\rm{supp} (1-\rho)=\overline{D}$, it has been shown in \cite[Theorem 5.10]{peter1998On} that (\ref{LS})--(\ref{boundary}) admits a unique solution $\mathbf{u}^{{\rm sc}}, \mathbf{u}\in \mathit{C}^{2}( \mathbb{R}^3\backslash \overline{D}).$

\begin{remark}
The well-posedness of the medium scattering problems (\ref{eq:helmholtz})-(\ref{eq:boundarycondition}) and (\ref{LS})-(\ref{boundary}) can be established by the integral equation method (see, e.g.\cite{colton.kress, peter1998On }). The key ingredients for proving uniqueness are Green's theorem, Rellich's lemma and the unique continuation for elliptic equations. Existence of a solution is derived with the help of the representation theorem which leads to a Fredholm integral equation of Lippmann-Schwinger type for the displacement $u$ or $\mathbf{u}$. Under the weaker regularity assumption that $n, \rho\in L^\infty(\R^3)$, the well-posedness of the acoustic and elastic medium scattering problem can be established via variational approach together with properties of Dirichlet-to-Neumann operators for the Helmholtz and Naiver equations, we refer to \cite[Chapter 5.3]{colton.kress} and \cite{Hu} for the details.
\end{remark}

\subsection{Inverse problems and literature review}
Suppose that the contrast functions $|1-n|$ and $|1-\rho|$ is strictly positive in a neighborhood of $\partial D$ in $D$. In this paper we study the following two $\textit{inverse medium scattering problems}$ with infinitely many plane waves at a fixed energy:

{\bf IP1}: For a fixed $k$, determine the support $D$ of the inhomogeneous medium together with $n|_{\partial D}$ from knowledge of $u^{\infty}(\hat{x},d)$ for all $\hat{x},d\in \mathbb{S}^2 $.

{\bf IP2}: For a fixed $\omega$, determine the support $D$ of the inhomogeneous medium together with $\rho|_{\partial D}$ from knowledge of $\mathbf{u}^{\infty}(\hat{x}, d, q)$ for all $\hat{x},d, q\in \mathbb{S}^2,  d\cdot q=0$.

If $D$ is an impenetrable obstacle, many uniqueness results have been obtained  with one or many incident plane waves at a fixe energy. The first uniqueness result for sound-soft obstacles in acoustics was given in \cite{colton.kress} based on the ideas of Schiffer. If two scatterers produce the same far field pattern for all incident directions, Isakov \cite{Isakov1990On} derived a contradiction by considering a series of solutions with a singularity moving towards a point on the boundary of one obstacle but not contained in the other obstacle. Motivated by \cite{Isakov1990On}, Kirsch and Kress \cite{KK1993,colton.kress} simplified Isakov's method by using a priori estimate of classical solutions to an integral equation system that is equivalent to the scattering problem. This method also works for sound-hard and Robin-type impenetrable obstacles as well as penetrable ones satisfying interface transmission conditions with a discontinuity condition on the normal derivative (see \eqref{trans} below); see \cite{KK1993}. In linear elasticity it was applied in \cite{peter1993uniqueness} to get uniqueness in inverse time-harmonic scattering of elastic waves.

If $D$ is a penetrable body, H\"{a}hner in \cite{Peter2000penetrable} introduced a new technique to prove the unique determination of a penetrable, inhomogeneous and anisotropic body from a knowledge of the scattered near fields for all incident plane waves. H\"{a}hner's method is based on the existence, uniqueness and regularity of solutions to an interior transmission problem in $D$. The work \cite{Uhlmann} uses complex geometrical optics solutions of the Helmholtz equation to prove that a penetrable obstacle with $C^2$-smooth boundary in two dimensions can be uniquely reconstructed by near field measurements. The results of \cite{Uhlmann} was then extended to three dimensions in \cite{Yoshida} and to the case that $D$ is penetrable with Lipschitz continuous boundary \cite{Sini}. Moreover, \cite{MM2014} also uses complex geometrical optics solution of the elastic wave equation to get uniqueness with far-field measurements. There are also other works dealing with the penetrable body with embedded obstacles. It was proved in \cite{L-Z} that $D$ can be determined from knowledge of the acoustic far field pattern for incident plane waves when the refractive index function $n$ is a known constant in $D$. 
All the above uniqueness results were obtained on the jump condition for the normal derivatives, i.e.,
\be\label{trans}
\partial_{\nu} u^{+} =\gamma \partial_{\nu} u^{-} \text{ on } \partial D, \quad \gamma \neq 1.
\en

In this work we are interested in inverse medium scattering problems with the transmission coefficient $\gamma=1$ (see \eqref{trans}) both in acoustics and elastics. Physically, the {\em continuity} of the normal derivatives (in the sense of trace) leads to a weakly scattering effect by the penetrable obstacle, in comparing with the strongly scattering due to the transmission coefficient $\gamma\neq 1$. This perhaps explains why it is difficult to carry out the same a priori estimate as in \cite{KK1993} for singular solutions incited by the fundamental solution as an incoming wave. To overcome this difficulty, \cite{Yang2017Uniqueness} proposed a new method to establish uniqueness for recovering the shape of a penetrable $D$ in acoustic case. It is based on the $L^p$-estimate of hyper-singular solutions and also on the constructing a properly defined and well-posed interior transmission problem in a small domain inside $D$. The selection of a small domain ensures that the lowest transmission eigenvalue is large, so that a given wave number $k$ is not an eigenvalue of the constructed interior transmission problem. The same trick has been employed in \cite{xiang} for inverse elastic scattering problems. The main contribution of this paper is to propose a more direct approach by avoiding a priori estimate as much as possible and without using interior transmission eigenvalue problems. Due to the weakly scattering, it still seems necessary to use hyper-singular solutions. If the refractive index function admits jumps across the boundary, we employ singular total field caused by the first derivative of the fundamental solution when $\gamma=1$. This direct approach requires a more subtle analysis on the singular behavior of the scattered field at the boundary as the source position approaches to a boundary point from exterior, which has been used by Potthast \cite{Potthast} in acoustic and electromagnetic inhomogeneous medium scattering. In \cite{Potthast}, the unknown domain $D$ is assumed to be bounded, uniformly smooth and satisfy an exterior cone condition and the refractive index function is assumed to be H\"older continuous. Based on the lower and upper estimates of the scattered field generated by a multiple, Potthast \cite{Potthast} proved uniqueness for recovering the shape of $D$. The main contribution of this paper is to calculate the leading part of the asymptotics explicitly in terms of the medium parameters. Consequently, we show that the leading part yields information not only on the position and geometric shape of the scatterer but also on the value of the medium function at the boundary. Such an idea has been early used in \cite{EH11} for inverse electromagnetic scattering problems in a multi-layered medium in the TM polarization case where $\gamma\neq 1$ is the ratio of the two wavenumbers on both sides of the boundary.  

The rest of the paper is organized as follows. In Sections \ref{Acoustic}, we prove that far field patterns for incident plane waves with all incident directions at a fixed wave number uniquely determine the shape of the penetrable body $D$ in acoustics. In Section \ref{Elastic}, the hyper-singular source method will be carried over to inverse elastic scattering by penetrable bodies with the same uniqueness result.

\section{Inverse acoustic scattering problem}\label{Acoustic}

We briefly introduce the Lippmann-Schwinger equation and the volume potential. We first present a mapping property for the volume potential in Sobolev spaces \cite[Theorem 8.2]{colton.kress}.
\begin{theorem}\label{vp-acoustic}
Given two bounded domains $D$ and $G$, the volume potential
\begin{equation}
(V\varphi)(x):=\int_{D}\ \Phi(x,y)\varphi(y)\dif y,\ \ x\in  \mathbb{R}^3
\end{equation}
defines a bounded operator $V: L^2(D)\rightarrow H^2(G).$
\end{theorem}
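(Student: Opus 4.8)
The plan is to deduce the $H^2$-mapping property from the defining property of the kernel as the fundamental solution of the Helmholtz operator, rather than attempting to differentiate the singular kernel twice under the integral sign. Recall that in $\mathbb{R}^3$ the kernel is
\[
\Phi(x,y)=\frac{e^{ik|x-y|}}{4\pi|x-y|},
\]
whose singularity is of order $|x-y|^{-1}$. First I would establish boundedness $V\colon L^2(D)\to L^2(G)$. Since $D$ and $G$ are bounded and $|\Phi(x,y)|\le C\,|x-y|^{-1}$ with $|x-y|^{-1}\in L^1_{\mathrm{loc}}(\mathbb{R}^3)$, the kernel is weakly singular, so both $\sup_{x}\int_D|\Phi(x,y)|\,\dif y$ and $\sup_{y}\int_G|\Phi(x,y)|\,\dif x$ are finite. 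A Schur-test (or Young's inequality for the convolution structure) then gives $\|V\varphi\|_{L^2(G)}\le C\|\varphi\|_{L^2(D)}$.

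Next I would treat the first derivatives by differentiating under the integral sign, which is legitimate because $|\partial_{x_j}\Phi(x,y)|\le C\,|x-y|^{-2}$ and $|x-y|^{-2}$ is still locally integrable in three dimensions. Thus
\[
\partial_{x_j}(V\varphi)(x)=\int_D \partial_{x_j}\Phi(x,y)\,\varphi(y)\,\dif y ,
\]
and the same weak-singularity/Schur-test argument yields the $H^1$-bound $\|V\varphi\|_{H^1(G)}\le C\|\varphi\|_{L^2(D)}$.

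The hard part will be the second derivatives, since the naively differentiated kernel behaves like $|x-y|^{-3}$, which is no longer integrable, so neither the Schur test nor Young's inequality applies and one cannot simply differentiate twice under the integral. The key device that circumvents any direct singular-integral estimate is the identity $(\Delta_x+k^2)\Phi(x,y)=-\delta(x-y)$: testing against $\varphi$ shows that $V\varphi$ solves, in the distributional sense, $(\Delta+k^2)V\varphi=-\varphi$, hence $\Delta(V\varphi)=-\varphi-k^2V\varphi$. By the first two steps the right-hand side lies in $L^2(G')$ for any bounded $G'$ with $\bar G\subset G'$. Interior $L^2$-regularity for the Laplacian (proved e.g.\ by the difference-quotient method) then gives $V\varphi\in H^2(G)$ together with the estimate
\[
\|V\varphi\|_{H^2(G)}\le C\big(\|\Delta(V\varphi)\|_{L^2(G')}+\|V\varphi\|_{L^2(G')}\big)\le C'\|\varphi\|_{L^2(D)},
\]
which is the claimed boundedness. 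An alternative to the last step, if one prefers a purely potential-theoretic route, is to split $\Phi(x,y)=\frac{1}{4\pi|x-y|}+\frac{e^{ik|x-y|}-1}{4\pi|x-y|}$: the remainder has a bounded (removable) singularity and is easily handled, while the Newtonian potential generated by the Laplace kernel is exactly the setting of the classical Calder\'on--Zygmund $H^2$-mapping theorem. Either way, I expect the genuine obstacle to be the non-integrability of the second-order kernel, and the reduction to a Poisson-type equation via the fundamental-solution identity to be the cleanest resolution.
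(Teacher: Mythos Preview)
Your argument is correct. The Schur test handles the $L^2$ and $H^1$ bounds since the kernel and its first derivatives are weakly singular in $\mathbb{R}^3$, and the reduction to the Poisson-type equation $\Delta(V\varphi)=-\varphi-k^2V\varphi$ together with interior $H^2$-regularity cleanly bypasses the non-integrable second-order kernel. One small point worth making explicit: when you invoke $\Delta(V\varphi)=-\varphi-k^2V\varphi$ on $G'$, the density $\varphi$ should be understood as extended by zero outside $D$, so that the right-hand side indeed lies in $L^2(G')$ regardless of whether $G'$ overlaps $D$.

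The paper, however, does not supply its own proof of this statement; it simply quotes the result as \cite[Theorem~8.2]{colton.kress}. The route taken there (and mirrored in the elastic analogue, Lemma~\ref{volume-potential} of the present paper) is different from yours: Colton and Kress first establish the mapping property in H\"older spaces---showing $V\colon C^{0,\gamma}\to C^{2,\gamma}$ by classical potential-theoretic estimates---and then transfer this to the Sobolev scale via Lax's duality theorem \cite[Theorem~3.5]{colton.kress}. Your approach is more self-contained in that it stays entirely within the $L^2$ framework and uses only the defining PDE of the kernel plus standard elliptic interior regularity; the Colton--Kress route, by contrast, leverages an already-built classical theory but requires the extra machinery of the H\"older estimates and the abstract duality lemma. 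Both arrive at the same conclusion, and either would serve the paper's purposes.
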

Then the Lippmann-Schwinger equation is defined as
\begin{equation}
u(x)=u^{{\rm in}}(x)-k^2\int_{\mathbb{R}^3 }(1-n(y))\Phi(x,y)u(y)\dif y,\ \ x\in  \mathbb{R}^3 .
 \end{equation}
The medium scattering problem (\ref{eq:helmholtz})--(\ref{eq:boundarycondition}) is equivalent to the problem of solving the above integral equation; see \cite[Theorem 8.3]{colton.kress}.

Next we prove the unique determination of the support $D$ of the inhomogeneous medium from the far-field pattern $u^{\infty}(\hat{x}, d)$ for all $\hat{x}, d \in \mathbb{S}^{2}$ by using the Lippmann-Schwinger equation and first derivatives of the fundamental solution for the Helmholtz equation.
\begin{theorem}[Uniqueness with infinitely many plane waves] \label{thm:main}
Let $n\in L^\infty(\R^3)$ be real valued. Suppose that $|1 - n(x)| \geqslant C_0 > 0$ a.e. for $x \in D$ lying in a neighboring area of $\partial D$. Then the measurement data $u^{\infty}(\hat{x}, d)$ for all $\hat{x}, d \in \mathbb{S}^{2}$ uniquely determine $\partial D$. Furthermore, if $n\in C^{0,\gamma}(\overline{D})$ for some $\gamma\in(0,1)$, then the value of $n$ on $\partial D$, that is, $n|_{\partial D}$, can also be uniquely determined by the same measurement data.
\end{theorem}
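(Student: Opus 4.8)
The plan is to argue by contradiction, combining a Rellich/reciprocity reduction with a quantitative analysis of the singular behaviour of the scattered field. Suppose two real-valued indices $n_1,n_2\in L^\infty(\mathbb{R}^3)$ with supports $\overline{D_1},\overline{D_2}$ (each satisfying the hypotheses) produce the same far-field pattern $u^\infty(\hat x,d)$ for all $\hat x,d\in\mathbb{S}^2$, and let $G$ be the unbounded connected component of $\mathbb{R}^3\setminus(\overline{D_1}\cup\overline{D_2})$. First I would upgrade the plane-wave data to coincidence of \emph{dipole-source} scattered fields in $G$. By Rellich's lemma together with the connectedness of $G$ and unique continuation, the plane-wave scattered fields of the two media coincide throughout $G$; the mixed reciprocity relation (equivalently, density of Herglotz wave functions) then transfers this to coincidence of the scattered fields $w_j(x,z)$ generated by the point source $\Phi(\cdot,z)$, for all $x,z\in G$. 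Since each $w_j(x,z)$ solves the Helmholtz equation, hence is real-analytic, in each variable on $G$, all its mixed derivatives agree; in particular the scattered fields produced by the dipole incident wave $\nu\cdot\nabla_z\Phi(\cdot,z)$ coincide in $G$.

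Next I would set up the hyper-singular source. Assuming $D_1\neq D_2$, choose (without loss of generality) a point $z^*\in\partial D_1\setminus\overline{D_2}$, let $\nu=\nu_1(z^*)$ be the exterior unit normal of $D_1$, and put $z_h=z^*+h\nu\in G$ for small $h>0$. With the first-derivative (hyper-singular) incident field $u^{\mathrm{in}}_h=\nu\cdot\nabla_z\Phi(\cdot,z_h)$, denote by $w^{\mathrm{dip}}_j(\cdot,z_h)$ the associated scattered field and study the observable $I_j(h):=\nu\cdot\nabla_x w^{\mathrm{dip}}_j(x,z_h)\big|_{x=z_h}$, the normal derivative of the scattered field evaluated at the source. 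Because $z^*\notin\overline{D_2}$, the dipole source stays uniformly away from $D_2$ as $h\to0$, so $I_2(h)=O(1)$. The core claim is that for medium $1$,
\[
I_1(h)=-\frac{k^2}{32\pi}\,\bigl(1-n_1(z^*)\bigr)\,\frac1h+o\!\left(\frac1h\right),\qquad h\to0^+ .
\]
Granting this, the coincidence above gives $I_1(h)=I_2(h)$, while $|1-n_1(z^*)|\geq C_0>0$ forces $I_1(h)\to\infty$; this contradiction yields $\partial D_1=\partial D_2$, hence $D_1=D_2$. When in addition $n\in C^{0,\gamma}(\overline{D})$, the value $n(z^*)$ is the well-defined trace, the explicit coefficient $-\tfrac{k^2}{32\pi}(1-n(z^*))$ of the $h^{-1}$ term is determined by the data, and equating the coefficients for $n_1$ and $n_2$ gives $n_1=n_2$ on $\partial D$.

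To derive the asymptotic formula I would insert the Lippmann--Schwinger representation $w^{\mathrm{dip}}_1(x,z_h)=-k^2\int_{D_1}(1-n_1(y))\Phi(x,y)u_h(y)\,dy$, with $u_h=u^{\mathrm{in}}_h+w^{\mathrm{dip}}_1(\cdot,z_h)$, and split the total field into its incident and scattered parts. The leading contribution is the incident--incident term
\[
I_0(h)=-k^2\int_{D_1}\bigl(1-n_1(y)\bigr)\,\bigl[\nu\cdot\nabla_x\Phi(z_h,y)\bigr]\,\bigl[\nu\cdot\nabla_z\Phi(y,z_h)\bigr]\,dy .
\]
Here I would localize to a neighbourhood of $z^*$ (the far part being $O(1)$), flatten $\partial D_1$ to its tangent plane using the $C^2$-smoothness (the curvature correction is of lower order), replace $1-n_1(y)$ by its boundary value, and evaluate the resulting half-space integral. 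Using the most singular parts $\nabla\Phi(z_h,y)\sim-\tfrac{1}{4\pi}(z_h-y)/|z_h-y|^3$, the half-space integral of $(\nu\cdot(z_h-y))^2/|z_h-y|^6$ equals $\tfrac{\pi}{2h}$, which produces the stated coefficient $\tfrac{1}{32\pi h}$.

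The main obstacle is controlling the remainder so that only the incident--incident term survives at order $h^{-1}$. I expect the decisive estimate to come from the mapping property of Theorem~\ref{vp-acoustic}: solving the Lippmann--Schwinger equation gives $\|u_h\|_{L^2(D_1)}\lesssim\|u^{\mathrm{in}}_h\|_{L^2(D_1)}\sim h^{-1/2}$, whence $w^{\mathrm{dip}}_1(\cdot,z_h)=-k^2 V[(1-n_1)u_h]\in H^2$ with $\|w^{\mathrm{dip}}_1(\cdot,z_h)\|_{H^2}\lesssim h^{-1/2}$; the Sobolev embedding $H^2\hookrightarrow C^0$ in $\mathbb{R}^3$ then bounds it pointwise by $O(h^{-1/2})$, and since $\int_{D_1}|\nu\cdot\nabla_x\Phi(z_h,y)|\,dy=O(1)$, every term involving the scattered part is $O(h^{-1/2})=o(h^{-1})$. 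The Hölder continuity enters only in the boundary-value claim: the replacement error $\int_{D_1}|n_1(y)-n_1(z^*)|\,|z_h-y|^{-4}\,dy\lesssim h^{\gamma-1}=o(h^{-1})$ is controlled precisely because $\gamma\in(0,1)$, so the coefficient of $h^{-1}$ reads off exactly $1-n_1(z^*)$. A point to treat with care is the legitimacy of differentiating the scattered field at $x=z_h$: this is justified because $z_h\in G$, where $n_j\equiv1$ and $w^{\mathrm{dip}}_j$ solves the homogeneous Helmholtz equation and is therefore real-analytic, while the $H^2$ bound is used only for the global pointwise control of the scattered part inside the volume integral.
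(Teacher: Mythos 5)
Your proposal is correct in outline and follows the same skeleton as the paper's proof (contradiction via Rellich's lemma and the mixed reciprocity relation, singular sources $z_h\to z^*\in\partial D_1\setminus\overline{D_2}$, Lippmann--Schwinger splitting into a singular incident--incident term plus controlled remainders), but your probing source is genuinely different. The paper keeps the \emph{monopole} incident field $\Phi(\cdot,z_j)$ and differentiates only the scattered field once in $x$: the leading integrand $[\nu\cdot\nabla_x\Phi]\,\Phi\sim|z_j-y|^{-3}$ is exactly borderline in $\mathbb{R}^3$ and yields a \emph{logarithmic} blow-up $\tfrac{C_0k^2}{16\pi}\ln(\delta j+1)$ (its Lemma 2.3), with the scattered part uniformly bounded so that all remainders are $O(1)$ essentially for free. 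You differentiate once more, in the source variable, so your integrand is $\sim|z_h-y|^{-4}$ and you get an $O(1/h)$ blow-up with the explicit coefficient $-\tfrac{k^2}{32\pi}(1-n(z^*))$. Your half-space computation $\int(\nu\cdot(z_h-y))^2|z_h-y|^{-6}\,dy=\pi/(2h)$ is correct, and your remainder bookkeeping ($\|u^{\mathrm{in}}_h\|_{L^2(D_1)}\sim h^{-1/2}$, hence $\|w^{\mathrm{dip}}\|_{C^0}=O(h^{-1/2})=o(h^{-1})$ via $H^2\hookrightarrow C^0$, plus the $O(h^{\gamma-1})$ H\"older replacement and $O(1)$ curvature correction) checks out. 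The price is two extra steps the paper avoids: you must justify that the dipole scattered field equals $\nu\cdot\nabla_z u^{\mathrm{sc}}(\cdot,z)$ (best done via difference quotients of the Lippmann--Schwinger equation, not just by asserting analyticity), and your incident field is no longer in $L^2_{\mathrm{loc}}(\mathbb{R}^3)$ near the source (harmless, since only $u^{\mathrm{in}}_h|_{D_1}\in L^2(D_1)$ enters the operator $K$, but it should be said). The payoff is a stronger, rate-explicit asymptotics.

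Two points to tighten. First, your ``core claim'' is stated in terms of the value $n_1(z^*)$, which is undefined in the purely $L^\infty$ part of the theorem; for the shape determination you should replace it by the lower bound $|I_0(h)|\ge\tfrac{C_0k^2}{32\pi h}\bigl(1+o(1)\bigr)$. Second --- a caveat you share with the paper (cf.\ its Remark \ref{rem}) --- pulling $|1-n|\ge C_0$ inside the integral to obtain a lower bound tacitly requires that $1-n$ does not change sign near $\partial D$; the hypothesis $|1-n|\ge C_0$ a.e.\ alone does not rule out cancellation. Neither point invalidates the approach.
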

\begin{remark}
We remark that the determination of $\partial D$ has been shown in \cite[Chapter 2]{Potthast} for H\"older continuous  refractive index functions. 
\end{remark}
\begin{proof}
Assume that there are two penetrable obstacles $D$ and $D'$ such that
\begin{equation}\label{eq:relation1}
u^{\infty}(\hat{x}, d) = u'^{\infty}(\hat{x}, d)\text{ for all } \hat{x}, d \in \mathbb{S}^{2}.
\end{equation}
If $D \neq D'$, we shall derive a contradiction from the above relation. By Rellich's lemma, $u^{{\rm sc}}(x,d)=u'^{{\rm sc}}(x, d)$ for all $ x \in G, d \in\mathbb{S}^{2}$, where $G$ denotes the unbounded component of $\mathbb{R}^{3}\backslash \overline{D \cup D'}$. Using the mixed reciprocity relation~\cite[Theorem 3.24]{colton.kress} one can deduce that
\begin{equation*}
4\pi u^{\infty}(-d,x)=4\pi u'^{\infty}(-d, x)\text{ for all }  d\in\mathbb{S}^{2},x\in G.
\end{equation*}
Rewriting the above equation with $-d = \hat{x}$, $x = z$ gives
	\begin{equation*}
		 u^{\infty}(\hat{x}, z) = u'^{\infty}(\hat{x},
		 z) \text{ for all }\hat{x} \in \mathbb{S}^{2}, \ z \in G.
	\end{equation*}
    Then we get
	\begin{equation*}
		u^{{\rm sc}}(x, z) = u'^{{\rm sc}}(x, z)\text{ for all } x, z \in G,
	\end{equation*}
due to the Rellich's lemma. This implies that
	\begin{equation} \label{eq:relation2}
		\nabla u^{{\rm sc}}\left(x, z\right) = \nabla u'^{{\rm sc}}\left(x, z\right)\text{ for all } x, z \in G.
	\end{equation}
	Here $u^{\infty}(\hat{x}, z)$ and $u^{{\rm sc}}(x, z)$ denote the far-field patterns
	and the scattered fields corresponding to a point source generated at $z \in
	\mathbb{R}^{3}$, respectively.
	
Choose $z^{*} \in \partial D \cap \partial G$ and $z^{*} \notin \overline{D'}$ (see Figure \refeq{fig:choiceOfzstar} for the choice of $z^{*}$) and let
$$z_{j}
	\coloneqq z^{*} + \frac{\nu(z^{*})}{j} \in \mathbb{R}^{3} \backslash
	\overline{D},\quad j = 1, 2, \cdots,$$
where $\nu(z^{*})$ is the unit outer normal at $z^{*}$. Define a sequence of incoming point source waves
 $$u_{j}^{{\rm in}}(x) = \Phi(x,
	z_{j}) := \frac{e^{ik\vert x - z_{j}\vert}}{4\pi \vert x - z_{j}\vert},\quad x\neq z_j.$$
	By straightforward computations, it holds that $u_{j}^{{\rm in}} \in L_{{\rm loc}}^{2}(\mathbb{R}^{3})$.
	\begin{figure}[htp]
		\centering
		\includegraphics{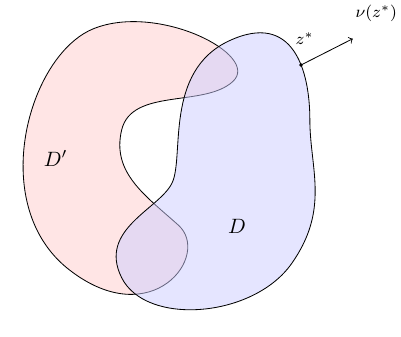}
		\caption{Illustration of the location of $z^{*}$ and the normal direction $\nu(z^{*})$.}
		\label{fig:choiceOfzstar}
	\end{figure}
	Recalling the Lippmann-Schwinger equation that is equivalent
	to the medium scattering problem, we get the integral equation
	\begin{equation} \label{eq:lippmannSchwinger}
		u(x, z_{j}) = u_{j}^{{\rm in}}(x) - k^{2}\int_{\mathbb{R}^{3}} (1 - n(y)) \Phi(x, y) u(y, z_{j})\dif y,\ x \in \mathbb{R}^{3}.
	\end{equation}
	Then we define the volume operator $K \colon L_{{\rm loc}}^{2}(\mathbb{R}^{3})
	\to H_{{\rm loc}}^{2}(\mathbb{R}^{3})$ by
	\begin{equation} \label{eq:operatorK}
		(K \varphi) (x) \coloneqq -k^{2}\int_{\mathbb{R}^{3}} (1 - n(y))\Phi(x, y)\varphi(y)\dif y, \ x \in \mathbb{R}^{3}
	\end{equation}
	where $\supp(1 - n) = \overline{D}$. With Theorem \ref{vp-acoustic}, it holds that $Ku_{j}^{{\rm in}} \in H_{{\rm loc}}^{2}
	(\mathbb{R}^{3})$ if $n \in L^{\infty}(\mathbb{R}^{3})$.
	 By the definition of $K$, we can reformulate (\ref{eq:lippmannSchwinger}) as an abstract operator equation
	\begin{equation*}
		(I - K)u(x, z_{j}) = u_{j}^{{\rm in}}(x),\ j = 1, 2, \cdots.
	\end{equation*}
	Write $u_{j}(x) \coloneqq u(x, z_{j})$ for notational simplicity and introduce the functions $w_{j} \coloneqq
	u_{j} - u_{j}^{{\rm in}}$. It is easy to see that the functions $w_{j}$
	satisfy the integral equation
	\begin{equation*}
		(I - K)w_{j} = Ku_{j}^{{\rm in}},\ j \geqslant 1.
	\end{equation*}

	Let $\Omega$ be a subdomain of $\mathbb{R}^{3}$ containing $D$ and $z^{*}$.
	The embedding of $H^{2}(\Omega)$ into $L^{2}(\Omega)$ implies that $I - K \colon
	L^{2}(\Omega) \to L^{2}(\Omega)$ is a Fredholm operator with the index zero. By the uniqueness of forward scattering, the integral equation
	$(I - K)w_{j} = 0$ has only the trivial solution $w_{j} = 0$. Then due to the
	Fredholm alternative, $(I - K)w_{j} = Ku_{j}^{{\rm in}}$ has a unique solution in $L^{2}(\Omega)$
	such that $\Vert w_{j}\Vert_{L^{2}(\Omega)} \leqslant C\Vert Ku_{j}^{{\rm in}}\Vert_{L^{2}(\Omega)}$.
	Hence
	\begin{align*}
		\Vert w_{j}\Vert_{H^{2}(\Omega)} &= \Vert Kw_{j} + Ku_{j}^{{\rm in}}\Vert_{H^{2}(\Omega)} \\
		&\leqslant \Vert Kw_{j}\Vert_{H^{2}(\Omega)} + \Vert Ku_{j}^{{\rm in}}\Vert_{H^{2}(\Omega)} \\
		&\leqslant C\Vert w_{j}\Vert_{L^{2}(\Omega)} + \Vert Ku_{j}^{{\rm in}}\Vert_{H^{2}(\Omega)} \\
		&\leqslant C\Vert Ku_{j}^{{\rm in}}\Vert_{H^{2}(\Omega)} + \Vert Ku_{j}^{{\rm in}}\Vert_{H^{2}(\Omega)} \\
		&\leqslant C\Vert Ku_{j}^{{\rm in}}\Vert_{H^{2}(\Omega)}.
	\end{align*}
	Moreover, by the boundedness of operator $K$,
	we have
	\begin{equation} \label{eq:uniform}
		\Vert w_{j}\Vert_{C(\Omega)} \leqslant C \Vert w_{j}\Vert_{H^{2}(\Omega)} \leqslant C \Vert Ku_{j}^{{\rm in}}\Vert_{H^{2}(\Omega)} \leqslant C \Vert u_{j}^{{\rm in}}\Vert_{L^{2}(\Omega)} \leqslant C.
	\end{equation}
	Note that the uniform boundness also follows from the solvability of the
	integral equation in $C(\Omega)$.
	
	It follows from the Lippmann-Schiwinger equation that
	\begin{equation*}
		w_{j}(x) = -k^{2}\int_{D} (1 - n(y))\Phi(x, y) (w_{j}(y) + u_{j}^{{\rm in}}(y))\dif y,\ x \in \mathbb{R}^{3}.
	\end{equation*}
To prove uniqueness of the inverse problem we shall take the first derivative of $w$ by
	setting $v_{j}(x) := \nabla w_{j}(x) \cdot \nu(z^{*})$. Then we can compute $v_{j}(x)$ as
	\begin{equation*}
		v_{j}(x) = -k^{2} \int_{D} (1 - n(y)) (\nabla_{x} \Phi(x, y) \cdot \nu(z^{*}))(w_{j}(y) + u_{j}^{{\rm in}}(y))\dif y.
	\end{equation*}
	Dividing $v_{j}(x)$ into three parts at $x = z_{j}$ yields
	\begin{align*}
		v_{j}(z_{j})= &-k^{2} \int_{D} (1 - n(y)) (\nabla_{x} \Phi(z_{j}, y) \cdot \nu(z^{*}))w_{j}(y)\dif y \\
		              &-k^{2} \int_{D \cap B_{\delta}(z^{*})} (1 - n(y)) (\nabla_{x} \Phi(z_{j}, y) \cdot \nu(z^{*}))u_{j}^{{\rm in}}(y)\dif y \\
		              &-k^{2} \int_{D \backslash B_{\delta}(z^{*})} (1 - n(y)) (\nabla_{x} \Phi(z_{j}, y) \cdot \nu(z^{*}))u_{j}^{{\rm in}}(y)\dif y\\
		            = &I_{1}(n,w_{j}) + I_{2}(n) + I_{3}(n),
	\end{align*}
	where $\delta > 0$ is sufficiently small.
Using (\ref{eq:uniform})
	and the definition of $u_{j}^{{\rm in}}\left(x\right)$,
we can estimate $I_1(n,w_{j})$, $I_2(n)$, $I_3(n)$ as follows:
	\begin{equation} \label{eq:i3i4}
		\vert I_{1}(n,w_{j})\vert + \vert I_{3}(n)\vert < M\quad \ \mbox{for all}\quad j \in \mathbb{N},
	\end{equation}
	and
	\begin{equation} \label{eq:i2}
		\begin{aligned}
			\vert I_{2}(n)\vert &= \bigg\vert -k^{2} \int_{D \cap B_{\delta}(z^{*})} (1 - n(y)) (\nabla_{x} \Phi(z_{j}, y) \cdot \nu(z^{*}))u_{j}^{{\rm in}}(y)\dif y\bigg\vert \\
			&= \bigg\vert k^{2}\int_{D \cap B_{\delta}(z^{*})} (1 - n(y)) \frac{e^{ik\vert z_{j} - y\vert}}{4\pi \vert z_{j} - y\vert} \frac{e^{ik\vert z_{j} - y\vert}}{4\pi \vert z_{j} - y\vert^{2}}\left(\frac{z_{j} - y}{\vert z_{j} - y\vert} \cdot \nu\left(z^{*}\right)\right)\dif y\bigg\vert + \mathcal{O}(1) \\
			&\geqslant k^{2}\int_{D \cap B_{\delta}(z^{*})} |1 - n(y)|\;\frac{\Re(e^{i2k\vert z_{j} - y\vert})}{16\pi^{2}\vert z_{j} - y\vert^{3}}\left(\frac{z_{j} - y}{\vert z_{j} - y\vert} \cdot \nu(z^{*})\right)\dif y + \mathcal{O}(1) \\
			&\geqslant \frac{C_0}{2}\frac{k^2}{16\pi^2}\int_{D \cap B_{\delta}(z^{*})} \frac{1}{\vert z_{j} - y\vert^{3}}\dif y + \mathcal{O}(1) \\
			&= C_0\frac{k^2}{16\pi} \ln(\delta j+1) + \mathcal{O}(1)
		\end{aligned}
	\end{equation}
	as $j \to \infty$. Note that the last step follows from  Lemma \ref{integ-computation}  below and we have used the condition that $|1-n|\geq C_0$ almost everywhere in $D \cap B_{\delta}(z^{*})$.
This together with (\ref{eq:i3i4}) proves that
	\begin{equation} \label{eq:vd1}
		\lim\limits_{j
			\to \infty} \vert v_{j}(z_{j})\vert = \infty.
	\end{equation}
	
	Analogously to the definition of $v_{j}$ for $D$, we can define $v'_{j}(
	x) = \nabla w'_{j}(x) \cdot \nu(z^{*})$, where
	$w'_{j} \coloneqq u'_{j} - u_{j}^{in}$ denotes the scattered field corresponding
	to another penetrable obstacle $D'$. Recalling that $z^{*} \in \partial D$ and $z^{*}
	\notin \overline{D'}$, we obtain form the well-posedness of the forward scattering
	problem for $D'$ that
	\begin{equation*}
	\vert \nabla w'_{j}(x)\vert + \vert w'_{j}\left(x\right)\vert \leqslant C\quad \text{ for all } \vert x - z_{j}\vert < \epsilon,\ \epsilon > 0.
	\end{equation*}
	This in particular implies that
	\begin{equation} \label{eq:vd2}
		\vert v'_{j}\left(z_{j}\right)\vert = \vert \nabla w'_{j}\left(z_{j}\right) \cdot \nu\left(z^{*}\right)\vert \leqslant C,\ j \to \infty.
	\end{equation}
	On the other hand, we have the relation
	\begin{equation} \label{eq:relation3}
		v_{j}\left(z_{j}\right) = v'_{j}\left(z_{j}\right) \text{ for all }\ j \geqslant 1,
	\end{equation}
which contradicts (\ref{eq:vd1}) and (\ref{eq:vd2}). This contradiction implies that $D = D'$.

Next, we are going to determine the value of $n(x)\in C^{0,\gamma}(\overline{D})$ on $\partial D$. Suppose there are two refractive index functions $n$ and $n'$ and assume on the contrary that $n(x)\neq n'(x)$ at some boundary point $x=z^*\in \partial D$.
By the continuity, we assume that $|n(x)-n'(x)|\geq c_{0}>0$ for all $x\in D \cap B_{\delta}(z^{*})$ for some $\delta>0$. Then choose a sequence of incoming point source waves $u_{j}^{{\rm in}}(x)$ with $z_{j}=z^{*}+\nu(z^{*})/{j}\in \mathbb{R}^{3} \backslash\overline{D}$ $(j = 1, 2, \cdots,)$. Following the uniqueness proof in determining the boundary, we obtain
\begin{equation*}
I_{1}(n,w_{j}) + I_{2}(n) + I_{3}(n)=I_{1}(n',w'_{j}) + I_{2}(n') + I_{3}(n'),
\end{equation*}
that is
\begin{equation} \label{eq1}
I_{2}(n)-I_{2}(n')=I_{1}(n',w'_{j})-I_{1}(n,w_{j})+I_{3}(n')-I_{3}(n).
\end{equation}
For all $j \in \mathbb{N}$, the right hand of \eqref{eq1} is bounded by
\begin{equation*}
|I_{1}(n',w'_{j})-I_{1}(n,w_{j})+I_{3}(n')-I_{3}(n)|\leq |I_{1}(n',w'_{j})|+|I_{1}(n,w_{j})|+|I_{3}(n')|+|I_{3}(n)|<M.
\end{equation*}
While for the left hand of \eqref{eq1}, the calculations in \eqref{eq:i2} show that
\begin{equation*}
\begin{aligned}
\vert I_{2}(n)-I_{2}(n')\vert &= \bigg\vert -k^{2} \int_{D \cap B_{\delta}(z^{*})} (n'(y)- n(y)) (\nabla_{x} \Phi(z_{j}, y) \cdot \nu(z^{*}))u_{j}^{{\rm in}}(y)\dif y\bigg\vert \\
			&\geqslant k^{2}\int_{D \cap B_{\delta}(z^{*})} |n'(y) - n(y)|\;\frac{\Re(e^{i2k\vert z_{j} - y\vert})}{16\pi^{2}\vert z_{j} - y\vert^{3}}\left(\frac{z_{j} - y}{\vert z_{j} - y\vert} \cdot \nu(z^{*})\right)\dif y + \mathcal{O}(1) \\
			&\geqslant \frac{c_0}{2}\frac{k^2}{16\pi^2}\int_{D \cap B_{\delta}(z^{*})} \frac{1}{\vert z_{j} - y\vert^{3}}\dif y + \mathcal{O}(1)\\
&= c_0\frac{k^2}{16\pi} \ln(\delta j+1) + \mathcal{O}(1)
		\end{aligned}
	\end{equation*}
as $j \to \infty$, i.e. $\lim\limits_{j\to\infty} \vert I_{2}(n)-I_{2}(n')\vert = \infty$. This is a contradiction which implies that $n(x)=n'(x)$ on $\partial D$.
\end{proof}

\begin{lemma} \label{integ-computation} Suppose that the boundary $\partial D$ is $C^2$-smooth and $\delta>0$ is sufficiently small.
A lower bound of the asymptotic behavior of $\vert I_{2}\vert$ can be estimated by
	\begin{equation*}
		\vert I_{2}\vert \geqslant C_0\frac{k^2}{16\pi} \ln(\delta j+1) + \mathcal{O}(1)\quad\mbox{as}\quad j\rightarrow\infty,
	\end{equation*}
	where $C_0>0$ is the positive lower bound of $|1-n|$ for $x\in D$ close to $\partial D$.
\end{lemma}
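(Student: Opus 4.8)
The plan is to reduce the lemma to the single geometric estimate
\[
\int_{D\cap B_\delta(z^*)}\frac{\dif y}{|z_j-y|^3}=2\pi\,\ln(\delta j+1)+\mathcal{O}(1)\qquad\text{as } j\to\infty .
\]
Once this is in hand, the asserted lower bound on $|I_2|$ follows by inserting $|1-n(y)|\ge C_0$ together with the two remaining weights appearing in \eqref{eq:i2}: the oscillatory factor $\Re(e^{2ik|z_j-y|})$ stays above $\tfrac12$ throughout $B_\delta(z^*)$ as soon as $\delta$ is fixed so small that $2k\delta<\pi/3$, and the normal-cosine factor $\frac{z_j-y}{|z_j-y|}\cdot\nu(z^*)$ is nonnegative and bounded below near the axis. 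Hence the whole statement rests on the displayed integral.

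First I would fix coordinates centred at $z^*$, taking $z^*=0$ and $\nu(z^*)=e_3$, so that $z_j=(0,0,a)$ with $a:=1/j$. Since $\partial D$ is $C^2$, near $z^*$ it is a graph $y_3=\phi(y')$, $y'=(y_1,y_2)$, with $\phi(0)=0$, $\nabla\phi(0)=0$ and $|\phi(y')|\le C|y'|^2$, while $D$ lies locally in $\{y_3<\phi(y')\}$. I would then compare the integral over $D\cap B_\delta$ with the same integral over the flat half-ball $H:=\{y_3<0\}\cap B_\delta(0)$. Writing $r=|y'|$ and $t=-y_3>0$ on $H$ gives $|z_j-y|^2=r^2+(a+t)^2$; the angular integration contributes $2\pi$, the radial integral equals $\frac{1}{a+t}$ up to a term bounded uniformly in $t$, and integrating $\frac{1}{a+t}$ over $t\in(0,\delta)$ produces $\ln\frac{a+\delta}{a}=\ln(\delta j+1)$. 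This reproduces the claimed leading term with constant $2\pi$.

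The main obstacle is controlling the curvature error, namely the difference between integrating over $D\cap B_\delta$ and over $H$. This difference is supported on the thin ``lens'' trapped between the tangent plane $\{y_3=0\}$ and the true boundary $\{y_3=\phi(y')\}$, whose thickness at radius $r$ is at most $Cr^2$. On this lens $t$ is of order $r^2$, so $|z_j-y|^2\approx r^2+a^2$ and the contribution is bounded by
\[
\int_0^\delta\frac{2\pi C\,r^3}{(r^2+a^2)^{3/2}}\,\dif r .
\]
Splitting the integral at $r=a$, the part $r<a$ is $\mathcal{O}(a)$ and the part $r>a$ is $\mathcal{O}(1)$, both uniformly in $j$. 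Thus the curvature error is $\mathcal{O}(1)$ and leaves the logarithmic term untouched, yielding $\int_{D\cap B_\delta}|z_j-y|^{-3}\,\dif y=2\pi\ln(\delta j+1)+\mathcal{O}(1)$ and in particular the desired lower bound. It is precisely the quadratic bound $|\phi(y')|\le C|y'|^2$ that is needed here, so this is the step that consumes the $C^2$-regularity of $\partial D$.
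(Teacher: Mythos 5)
Your proof is correct and follows essentially the same route as the paper: both reduce the lemma to the cylindrical-coordinate computation $\int_{D\cap B_\delta(z^*)}|z_j-y|^{-3}\,\dif y=2\pi\ln(\delta j+1)+\mathcal{O}(1)$ over a half-ball, evaluated by exactly the radial/axial integration you describe (the paper places the origin at $z_j$ rather than $z^*$, which is only a translation). The one genuine difference is that you justify replacing $D\cap B_\delta(z^*)$ by the flat half-ball through the explicit $\mathcal{O}(1)$ lens estimate $\int_0^\delta r^3(r^2+a^2)^{-3/2}\,\dif r=\mathcal{O}(1)$, which is where the $C^2$-regularity is actually consumed, whereas the paper simply asserts that the integral "can be approximated" on a half-ball for small $\delta$; your version is the more complete one (and your remark on choosing $\delta$ with $2k\delta<\pi/3$ likewise makes explicit a smallness condition the paper leaves implicit).
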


\begin{proof}
To compute the integral in $\vert I_{2}\vert$, we fix $z_j=z^*+\nu(z^*)/j$ for some $j\geq 1$ at the origin and assume $\nu(z^*)=(0,0,-1)$.
Introduce the cylindrical
	coordinate system $(r, \theta, z)$ for $y=(y^{(1)}, y^{(2)}, y^{(3)})\in D\cap B_\delta(z^*)$:
	\begin{equation*}
		\begin{cases}
			y^{(1)} &= r\cos\theta, \\
			y^{(2)} &= r\sin\theta,\\
			y^{(3)} &= z,
		\end{cases}
	\end{equation*}
	where $r \geqslant 0$, $0 < \theta \leqslant 2\pi$ and $z \in \mathbb{R}$. Then we know that
	\begin{equation*}
		\dif y = \dif y^{(1)}\dif y^{(2)}\dif y^{(3)} = \bigg\vert \frac{\partial(y^{(1)}, y^{(2)}, y^{(3)})}{\partial(r, \theta, z)}\bigg\vert \dif r\dif\theta \dif z = r \dif r\dif \theta \dif z.
	\end{equation*}
	Since $\delta$ is sufficiently small, we approximate the integral by integrating
	$1/\vert z_{j} - y\vert^{3}$ on a half ball  centered at $z^{*}$ under the
	cylindrical coordinate system (see Figure \ref{fig:integralI2}).  We observe
	that $r$ should satisfy $0 < r < \sqrt{\delta^{2}-(z - \frac{1}{j})^{2}}$ for fixed $z \in (\frac{1}{j}, \frac{1}{j}+\delta)$. Hence we have the following estimate
	
	\begin{figure}[htp]
		\centering
		\includegraphics{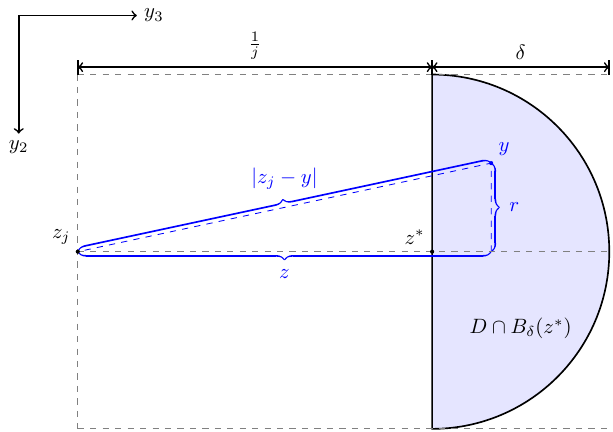}
		\caption{The cross section of domain $D \cap B_{\delta}(z^{*}) $ on the $oy_2y_3$-plane.}
		\label{fig:integralI2}
	\end{figure}
	
	\begin{align*}
		\int_{D \cap B_{\delta}(z^{*})} \frac{1}{\vert z_{j} - y\vert^{3}}\dif y &= \int_{\frac{1}{j}}^{\frac{1}{j}+\delta} \int_{0}^{2\pi} \int_{0}^{\sqrt{\delta^{2}-(z - \frac{1}{j})^{2}}}  (r^{2}+z^{2})^{-\frac{3}{2}} r \dif r \dif \theta \dif z + \mathcal{O}(1)\\
		                                                                         &=  2\pi\int_{\frac{1}{j}}^{\frac{1}{j}+\delta} \frac{1}{z} - \frac{1}{\sqrt{\delta^{2}-\frac{1}{j^{2}}+\frac{2}{j}z}} dz+ \mathcal{O}(1)\\
		                                                                         &= 2\pi\left(\ln{z}\Bigg\vert_{\frac{1}{j}}^{\frac{1}{j}+\delta} - j\sqrt{\delta^{2} - \frac{1}{j^{2}} + \frac{2}{j}z}\Bigg\vert_{\frac{1}{j}}^{\frac{1}{j}+\delta}\right) + \mathcal{O}(1)\\
		                                                                         &= 2\pi\ln{(\delta j+1)} - 4\pi\delta \frac{1}{\sqrt{\delta^{2} + \frac{1}{j^{2}}+\frac{2\delta}{j}} + \sqrt{\delta^{2} + \frac{1}{j^{2}}}} + \mathcal{O}(1)\\
		                                                                         &= 2\pi \ln{(\delta j+1)} + \mathcal{O}(1).
	\end{align*}
Now the desired estimate for the lower bound of $|I_2|$ follows from \eqref{eq:i2}.
\end{proof}


%

\begin{remark}\label{rem}
In Theorem \ref{thm:main}, we have assumed that the contrast function $1 - n$ is real valued (that is, the medium is non-absorbing) and does not change sign near $\partial D$. In the case of an absorbing medium (that is, $1 - n$ is complex valued) and $|\Re\left(1 - n\right)| \geqslant C > 0$ near $\partial D$, one can get the same uniqueness result. If the refractive index function is continuous at the interface but the absolute value of it's higher order derivatives keep a positive distance from zero near $\partial D$, it is still possible to get uniqueness by using singular solutions incited by higher order derivatives of the fundament solution. Moreover, we think that, through more delicate analysis, the hyper-singular source method developed here can be used to uniquely determine an analytic refractive index function defined on $\overline{D}$.  	
	\end{remark}

\section{Inverse elastic scattering problem}\label{Elastic}
 In this section, we first introduce the Lippmann-Schwinger equation for the Navier equation and properties of the fundamental tensor to the elastic wave equation. Then we apply the technique of using hyper-singular sources to obtain similar uniqueness
results for inverse elastic scattering from penetrable obstacles. Through out this paper, the symbol $\cdot$ stands for either the product between a matrix and a vector or the inner product between two vectors in $ \mathbb{R}^3$. All vectors are supposed to be column vectors.
\subsection{Fundamental solution and Lippmann-Schwinger equation}

We begin with the fundamental solution (or Green's tensor) $\Pi(x,y)$ to the Navier equation $\Delta^*\mathbf{u}+\omega^2\mathbf{u}=0$:
\begin{equation}
\Pi(x,y)=\dfrac{1}{\mu}\Phi_{k_s}(x,y)\mathbf{I}+\dfrac{1}{\omega^{2}}\nabla _x\nabla^{\top}_x \left[\Phi_{k_s}(x,y)-\Phi_{k_p}(x,y)\right],\ \ x\neq y
\end{equation}
where $ \Phi_k(x,y)=\dfrac{\mathrm{exp}(ik|x-y|)}{4\pi|x-y|}(k=k_p,k_s)$ is the fundamental solution to the Helmholtz equation.The symbol $\mathbf{I}$ stands for the $3\times3$  identity matrix and $'\top'$ denotes the transpose of a matrix or a vector.

Next, we will present some properties of the fundamental solution $\Pi(x,y)$, especially its behavior for $|x-y|\rightarrow 0.$ To this end we expand
 $\exp(ik|x|)/(4\pi|x|) $ to a power series to obtain
\begin{equation}\label{series}
\begin{aligned}
\frac{\exp(ik|x|)}{4\pi|x|}&=\frac{\cos(k|x|)}{4\pi|x|}+i\frac{\sin(k|x|)}{4\pi|x|}\\
                          &=\frac{1}{4\pi|x|}-\frac{k^2}{4\pi}|x|+k^4|x|^3f_1(k^2|x|^2)+ikf_2(k^2|x|^2)
\end{aligned}
\end{equation}
with two entire functions $f_1$ and $ f_2 $.

 Letting $r=|x-y|\rightarrow0$ and inserting the above expression (\ref{series}) into the definition of $\Pi(x,y)$, we obtain
\begin{equation}
\begin{aligned}
\Pi(x,y)&=\dfrac{1}{\mu}\Phi_{k_s}(x,y)\mathbf{I}+\dfrac{1}{\omega^{2}}\nabla _x\nabla^{\top}_x \left[\Phi_{k_s}(x,y)-\Phi_{k_p}(x,y)\right]\\
&=\frac{1}{\mu}\frac{1}{4\pi r}\mathbf{I}+\dfrac{1}{\omega^{2}}\nabla _x\nabla^{\top}_x \left(\frac{1}{8\pi}(k^2_p-k^2_s)|r|\right)+\mathcal{O}(1)\\
&=\frac{1}{\mu}\frac{1}{4\pi r}\mathbf{I}-\dfrac{\lambda+\mu}{8\pi\mu(\lambda+2\mu) }\nabla _x\nabla^{\top}_x (|r|)+\mathcal{O}(1).
\end{aligned}
\end{equation}
We denote the primary part of $\Pi(x,y)$ as
\begin{equation}
\Pi^0(x,y):=\frac{1}{\mu}\frac{1}{4\pi r}\mathbf{I}-\dfrac{\lambda+\mu}{8\pi\mu(\lambda+2\mu) }\nabla _x\nabla^{\top}_x (|r|)
\end{equation}
which is known as the Kelvin's matrix\cite{Kupradze}.
Direct calculations show that
\begin{equation}
\begin{aligned}
 \nabla_x r=(x-y)/r,\ \
 \nabla _x\nabla^{\top}_x (|r|)=\frac{1}{r}\mathbf{I}-\frac{1}{r}\hat{(x-y)}\otimes\hat{(x-y)}
\end{aligned}
\end{equation}
where the symbol $ '\otimes '$ represents the tensor product $a\otimes a:=aa^{\top}$ for the vector vector $a\in \R^{3}$, and $ \hat{(x-y)}:=(x-y)/|x-y|.$ Set $\alpha:=\frac{\lambda+3\mu}{8\pi\mu(\lambda+2\mu)}$ and $ \beta:=\frac{\lambda+\mu}{8\pi\mu(\lambda+2\mu)} $. Then
\begin{equation}
\Pi^0(x,y)=\alpha\frac{1}{r}\mathbf{I}+\beta\frac{1}{r}\hat{(x-y)}\otimes\hat{(x-y)}
\end{equation}
and
\begin{equation}\label{Asymptotic}
\Pi(x,y)=\Pi^0(x,y)+\mathcal{O}(1),\ \ \ |x-y|\rightarrow 0£¬
\end{equation}
which means that $\Pi(x,y)$ and $\Pi^0(x,y)$ have the same singularity of type $ 1/|x-y|$.

\begin{lemma}\label{cal1}
Let $ b\in  \mathbb{R}^3$ be a vector. Then it holds that

(1)\begin{equation}\label{singularity}
\begin{aligned}
\nabla_x\left(\Pi^0(x,y)\cdot  b \right)=\frac{1}{r^2}\left[-\alpha\left( b\otimes \overline{x}\right)+\beta\left((\overline{x}\cdot  b)\mathbf{I}+ \overline{x}\otimes\nu-3(\overline{x}\cdot  b)\overline{x}\otimes \overline{x}\right)\right].
\end{aligned}
\end{equation}

 (2)\begin{equation}
\begin{aligned}
&\nabla_x\left(\Pi^0(x,y)\cdot  b\right)\cdot \left(\Pi^0(x,y)\cdot  b\right)\\
=&\frac{1}{r^3}\left[-\alpha^2(\overline{x}\cdot b)b+\left(\alpha\beta-(3\alpha\beta+\beta^2)(\overline{x}\cdot b)^2\right)\overline{x}\right].
\end{aligned}
\end{equation}
Here $r=|x-y|$ and $\overline{x}:=\hat{(x-y)}.$

(3) In the special case that $b=\hat{x-y} $, we have \begin{equation}
\begin{aligned}
&\nabla_x\left(\Pi^0(x,y )\cdot  b\right)\cdot \left(\Pi^0(x,y)\cdot  b\right)=\frac{-1}{r^3}(\alpha+\beta)^2\overline{x}.
\end{aligned}
\end{equation}

\end{lemma}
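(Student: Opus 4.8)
The plan is to obtain all three identities by direct differentiation, reducing everything to a handful of elementary derivative rules for the radial quantities. Writing $\overline{x}:=(x-y)/r$ with $r=|x-y|$, the building blocks are
\begin{equation*}
\nabla_x r=\overline{x},\qquad \nabla_x\frac{1}{r}=-\frac{1}{r^2}\overline{x},\qquad \nabla_x\overline{x}=\frac{1}{r}\left(\mathbf{I}-\overline{x}\otimes\overline{x}\right),
\end{equation*}
all of which follow from $\partial_{x_j}r=\overline{x}_j$ together with the quotient rule. Throughout I would fix the convention $(\nabla_x F)_{ij}=\partial_{x_j}F_i$ for the Jacobian of a vector field $F$, so that differentiating a scalar multiple of $b$ produces the tensor $b\otimes\overline{x}$ rather than $\overline{x}\otimes b$.

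For part (1), I would first expand the vector $\Pi^0(x,y)\cdot b=\frac{\alpha}{r}b+\frac{\beta}{r}(\overline{x}\cdot b)\overline{x}$, using $(\overline{x}\otimes\overline{x})b=(\overline{x}\cdot b)\overline{x}$, and then differentiate entry by entry. The first summand contributes $-\frac{\alpha}{r^2}(b\otimes\overline{x})$ directly from $\nabla_x(1/r)$. For the second summand I would apply the product rule to the three factors $1/r$, $\overline{x}\cdot b$ and $\overline{x}$; using $\nabla_x(\overline{x}\cdot b)=\frac{1}{r}(b-(\overline{x}\cdot b)\overline{x})$ together with the rule for $\nabla_x\overline{x}$, the three resulting tensors combine into $\frac{\beta}{r^2}\left[(\overline{x}\cdot b)\mathbf{I}+\overline{x}\otimes b-3(\overline{x}\cdot b)\overline{x}\otimes\overline{x}\right]$. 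Adding the two pieces gives (1). (The symbol $\nu$ appearing in the stated formula is a misprint for $b$: the term produced by the computation is $\overline{x}\otimes b$.)

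For part (2), I would contract the matrix from (1) against the vector $\Pi^0(x,y)\cdot b=\frac{1}{r}(\alpha b+\beta(\overline{x}\cdot b)\overline{x})$ using the single algebraic rule $(A\otimes B)v=(B\cdot v)A$ and $\mathbf{I}v=v$. Evaluating the scalar contractions $\overline{x}\cdot v$ and $b\cdot v$ — where $\overline{x}\cdot\overline{x}=1$ simplifies matters — yields a linear combination of $b$ and $\overline{x}$. Collecting the coefficient of $b$, the $\alpha\beta$ cross terms cancel and leave $-\alpha^2(\overline{x}\cdot b)$; collecting the coefficient of $\overline{x}$ leaves $\alpha\beta|b|^2-(3\alpha\beta+\beta^2)(\overline{x}\cdot b)^2$. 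This matches the stated formula under the normalization $|b|=1$, which is the case of interest, so I would either declare $b\in\mathbb{S}^2$ or simply carry the harmless factor $|b|^2$. This bookkeeping of tensor--vector products is the step most prone to sign and index errors and is really the only obstacle; the rest is mechanical.

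Finally, part (3) is immediate from (2): setting $b=\overline{x}$ gives $\overline{x}\cdot b=1$ and $|b|=1$, so the coefficient of $b=\overline{x}$ becomes $-\alpha^2$ and the coefficient of $\overline{x}$ becomes $\alpha\beta-(3\alpha\beta+\beta^2)=-2\alpha\beta-\beta^2$; summing and factoring $-\alpha^2-2\alpha\beta-\beta^2=-(\alpha+\beta)^2$ yields $\nabla_x(\Pi^0(x,y)\cdot b)\cdot(\Pi^0(x,y)\cdot b)=-\frac{(\alpha+\beta)^2}{r^3}\overline{x}$.
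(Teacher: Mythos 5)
Your proof is correct and follows essentially the same route as the paper's: expand $\Pi^0(x,y)\cdot b$ and differentiate term by term using the elementary radial derivative rules, then contract with $\Pi^0(x,y)\cdot b$ via the tensor--vector identities. Your two side remarks are also accurate — the $\overline{x}\otimes\nu$ in the stated formula (1) is indeed a misprint for $\overline{x}\otimes b$ (the paper's own proof writes $\overline{x}\otimes b$), and part (2) as stated implicitly assumes $|b|=1$, which holds in the application where $b=\nu(z^*)$.
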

\begin{proof}
(1)
Set $z:=x-y=(z^{(1)}, z^{(2)}, z^{(3)})^{\top}$ and $\mathbf{A}:=\nabla_x\left(\frac{1}{r}\hat{z}\otimes\hat{z}\cdot  b \right)\in \mathbb{R}^{3\times3}$.  Since $ \hat{z}\otimes \hat{z}\cdot b=(\hat{z}\cdot b)\hat{z} $, the $j-$th row of $\mathbf{A}$ takes the form
\begin{equation}
\begin{aligned}
\mathbf{A}_j&=\nabla_x\left(\frac{z^{(j)}}{r^3} (z\cdot b) \right)\\
            &=(z\cdot b)\nabla_x(\frac{z^{(j)}}{r^3})+\frac{z^{(j)}}{r^3}b^{\top}\\
            &=(z\cdot b) \frac{1}{r^3}e_j^{\top}-(z\cdot b)\frac{3z^{(j)}}{r^5}z^{\top}+\frac{z^{(j)}}{r^3}b^{\top}.
\end{aligned}
\end{equation}
Here $\left\lbrace e_j\right\rbrace_{j=1}^{3}$ denote the unit vectors in $ \mathbb{R}^3$ in the Cartesian coordinates. From the expression of $\mathbf{A}_j$  we can derive
\begin{equation}\label{matrix}
 \begin{aligned}
 \mathbf{A}= \frac{1}{r^3}(z\cdot b)\mathbf{I}+\frac{1}{r^3}z\otimes b-\frac{3}{r^5}(z\cdot b)z\otimes z.
 \end{aligned}
 \end{equation}
Substituting (\ref{matrix}) into $\Pi^0(x,y)$ gives
\begin{equation}
\begin{aligned}
\nabla_x\left(\Pi^0(x,y)\cdot  b\right)=-\frac{\alpha}{r^3}b\otimes z+\beta\left[\frac{1}{r^3}(z\cdot b)\mathbf{I}+\frac{1}{r^3}z\otimes b-\frac{3}{r^5}(z\cdot b)z\otimes z\right].
\end{aligned}
\end{equation}
The proof is now completed by replacing $z$ with $x-y$.

(2) Recalling \eqref{singularity}, we know
\begin{equation}\label{mul1}
\begin{aligned}
\nabla_x\left(\Pi^0(x, y)\cdot  b\right) &=\frac{1}{r^2}\left[-\alpha\left(b\otimes \overline{x}\right)+\beta\left((\overline{x}\cdot b)\mathbf{I}+ \overline{x}\otimes b-3(\overline{x}\cdot b)\overline{x}\otimes\overline{x}\right)\right]
\end{aligned}
\end{equation}
and
\begin{equation}\label{mul2}
\begin{aligned}
 \Pi^0(x,y )\cdot  b =\frac{1}{r}[\alpha\mathbf{I}+\beta\overline{x}\otimes\overline{x}]\cdot b
\end{aligned}
\end{equation}
Now, taking the product between (\ref{mul1}) and (\ref{mul2}) yields
\begin{equation}
\begin{aligned}
&\nabla_x\left(\Pi^0(x,y )\cdot  b\right)   \cdot \left(\Pi^0(x,y )\cdot  b\right) \\
=&\frac{1}{r^3}\left[-\alpha(\alpha+\beta)\left(b\otimes \overline{x}\right)\cdot b-\beta^2(\overline{x}\cdot b)(\overline{x}\otimes\overline{x})\cdot b\right]\\
+&\frac{\alpha\beta}{r^3}\left[(\overline{x}\cdot b)b+\overline{x}-3(\overline{x}\cdot b)(\overline{x}\otimes\overline{x})\cdot b\right].
\end{aligned}
\end{equation}
The proof is finished by applying identities
\begin{equation*}\label{iden}
 \begin{aligned}
& \left(\nu\otimes \overline{x}\right)\cdot(\overline{x}\otimes\overline{x})\cdot\nu=\nu\cdot\overline{x}^{\top}\cdot\overline{x}\cdot\overline{x}^{\top}\cdot\nu=\left(\nu\otimes \overline{x}\right)\cdot\nu,\\
& \left(\overline{x}\otimes \nu\right)\cdot(\overline{x}\otimes\overline{x})\cdot\nu=
 (\overline{x}\cdot \nu)(\overline{x}\otimes\overline{x})\cdot\nu,\\
& \left(\nu\otimes \overline{x}\right)\cdot\nu=(\overline{x}\cdot \nu)\nu,\\
&(\overline{x}\otimes\overline{x})\cdot\nu=(\overline{x}\cdot \nu)\overline{x}.
 \end{aligned}
 \end{equation*}

(3) If $b=\hat{x-y}$, then using the results of the second assertion we have
\begin{equation}
\begin{aligned}
&\nabla_x\left(\Pi^0(x,y)\cdot  b\right)\cdot \left(\Pi^0(x,y)\cdot  b\right)\\
=&\frac{1}{r^3}\left[-\alpha^2(\overline{x}\cdot b)b+\left(\alpha\beta-(3\alpha\beta+\beta^2)(\overline{x}\cdot b)^2\right)\overline{x}\right]\\
=&\frac{1}{r^3}\left[-\alpha^2 b+\left(\alpha\beta-(3\alpha\beta+\beta^2)\right)\overline{x}\right]\\
=&\frac{-1}{r^3}(\alpha+\beta)^2\overline{x}.
\end{aligned}
\end{equation}
\end{proof}

At last, we  introduce the Lippmann-Schwinger equation that is equivalent to the scattering problem (\ref{LS})-(\ref{rd}). For this purpose we need to consider the volume potential
\begin{equation}
(V\varphi)(x):=\int_{D}\Pi(x,y)\varphi(y)\dif y,\ \ x\in  \mathbb{R}^3.
\end{equation}
Below we show a classical mapping property property of $V$ in H\"older spaces, the proof of which can be found in \cite[Theorem 5.6]{peter1998On}.
\begin{theorem}\label{TH3.2}
Let $D \subset B_R $, where $B_R $ is a ball centered at origin with radius R. If $\varphi\in [\mathit{C}^{0,\gamma}(D)]^{3}$ with $\gamma\in(0,1)$, then the volume potential $ V\varphi\in [\mathit{C}^2(B_R)]^{3}$.
\end{theorem}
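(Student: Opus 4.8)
The plan is to reduce the matrix-valued volume potential to a finite collection of scalar potentials governed by the two Helmholtz fundamental solutions $\Phi_{k_s}$ and $\Phi_{k_p}$, and then to invoke the classical Hölder potential theory for each scalar piece. Using the representation $\Pi=\frac{1}{\mu}\Phi_{k_s}\mathbf{I}+\frac{1}{\omega^{2}}\nabla_x\nabla_x^{\top}[\Phi_{k_s}-\Phi_{k_p}]$, the potential splits as
\begin{equation*}
V\varphi=\frac{1}{\mu}\mathbf{I}\int_D\Phi_{k_s}(\cdot,y)\varphi(y)\dif y+\frac{1}{\omega^{2}}\int_D\nabla_x\nabla_x^{\top}g(\cdot,y)\,\varphi(y)\dif y,\qquad g:=\Phi_{k_s}-\Phi_{k_p}.
\end{equation*}
The first summand is $\frac{1}{\mu}\mathbf{I}$ times a scalar Helmholtz volume potential, and the classical fact that such a potential maps $C^{0,\gamma}(D)$ into $C^2(B_R)$ (the Hölder analogue of the Sobolev statement in Theorem \ref{vp-acoustic}, obtained because $\Phi_k-\tfrac{1}{4\pi|x-y|}$ is smoother and the Newtonian potential of a Hölder density is $C^2$) disposes of it immediately.

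For the second summand the difficulty is that its kernel already carries two $x$-derivatives. Here I would exploit the cancellation displayed in the expansion \eqref{series}: the $1/|x-y|$ singularities of $\Phi_{k_s}$ and $\Phi_{k_p}$ coincide and cancel in the difference, so that $g(x,y)$ equals a constant multiple of $|x-y|$ plus terms that are $C^\infty$ in $(x,y)$ or of the form $|x-y|^3$ times a smooth function. Consequently $K(x,y):=\nabla_x\nabla_x^{\top}g(x,y)$ carries precisely the $1/|x-y|$ singularity of the Kelvin matrix $\Pi^0$ (cf. the definition of $\Pi^0$ and \eqref{Asymptotic}), the remaining contributions producing kernels that are, after the two differentiations, at worst weakly singular. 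Thus $\int_D K(\cdot,y)\varphi(y)\dif y$ is a volume potential whose leading kernel is of the admissible homogeneous form $|x-y|^{-1}P(\widehat{x-y})$, with $P$ smooth and homogeneous of degree zero on the unit sphere, plus a smoother remainder whose potential is easily seen to be $C^2$ (indeed $C^\infty$ for the entire-function part coming from $f_2$).

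It then remains to prove $C^2$-regularity of a volume potential with a kernel of type $|x-y|^{-1}P(\widehat{x-y})$ against a Hölder-continuous density, and this is the technical heart and the main obstacle. Taking two derivatives of such a potential produces a kernel of order $|x-y|^{-3}$, which is not absolutely integrable, so differentiation under the integral sign is only legitimate in the principal-value sense and must be accompanied by a free (jump) term, the exact analogue of the $-\varphi$ appearing in Poisson's equation for the Newtonian potential. The Hölder continuity of $\varphi$, rather than mere boundedness, is precisely what makes these principal-value integrals converge and depend continuously on $x$. I would carry this out by the standard regularization: integrate over $D\setminus B_{\epsilon}(x)$, differentiate under the integral for the now-regular kernel, and let $\epsilon\to 0$ while carefully tracking the surface contributions on $\partial B_{\epsilon}(x)$, which assemble into the free term; the explicit first-derivative kernels $\nabla_x(\Pi^0\cdot b)$ of order $1/r^{2}$ computed in Lemma \ref{cal1} furnish exactly the homogeneous symbols entering these computations. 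Combining the $C^2$ bounds for all three pieces—the scalar Helmholtz potential, the Kelvin-type strongly singular potential, and the weakly singular remainder—yields $V\varphi\in[C^2(B_R)]^{3}$.
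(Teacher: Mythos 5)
Your overall strategy is sound, but note first that the paper itself offers no proof of Theorem \ref{TH3.2}: it simply defers to H\"ahner's habilitation thesis (Theorem 5.6 there), so the comparison here is between your argument and the cited classical one. Your reduction --- splitting $V\varphi$ via $\Pi=\frac{1}{\mu}\Phi_{k_s}\mathbf{I}+\frac{1}{\omega^{2}}\nabla_x\nabla_x^{\top}g$ with $g=\Phi_{k_s}-\Phi_{k_p}$, observing from \eqref{series} that the $1/(4\pi|x-y|)$ singularities cancel in $g$ so that $\nabla_x\nabla_x^{\top}g$ is the Kelvin-type kernel, homogeneous of degree $-1$, plus a remainder whose second derivatives are at worst $\mathcal{O}(|x-y|)$ --- is exactly the right starting point and is consistent with \eqref{Asymptotic}. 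Your treatment of the ``technical heart'' (two derivatives of a potential with a degree $-1$ homogeneous kernel, via excision of $B_{\epsilon}(x)$, the subtraction $\varphi(y)-\varphi(x)$ made absolutely integrable by H\"older continuity, and the surface terms assembling into the free term) is the standard Gilbarg--Trudinger/Mikhlin argument and does work; one ingredient you should make explicit is why the principal value of the $\varphi(x)$-part exists, namely that the degree $-3$ kernels arising here are gradients of degree $-2$ homogeneous functions and therefore automatically have vanishing mean over spheres centered at $x$. A genuinely cheaper route, which I believe is the one behind the cited proof, avoids all strongly singular integrals: writing $\int_D g(\cdot,y)\varphi(y)\dif y=w_s-w_p$ with $w_\alpha$ the scalar Helmholtz potentials, the inhomogeneities cancel in $\Delta(w_s-w_p)=-k_s^2w_s+k_p^2w_p$, so interior Schauder estimates bootstrap $w_s-w_p$ to $C^{4,\gamma}_{\rm loc}$ and the two extra derivatives come for free. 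Your approach buys a self-contained kernel-level proof (and explicit formulas matching Lemma \ref{cal1}); the bootstrap buys brevity.

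One caveat worth recording, which your method would surface if carried out in full: with $\varphi$ H\"older only on $D$ and extended by zero, the second derivatives of these potentials generically jump across $\partial D$ (already for the Newtonian potential of $\chi_D$), so the conclusion $V\varphi\in[C^2(B_R)]^{3}$ across $\partial D$ requires that $\varphi$ extend H\"older-continuously by zero to $B_R$. This holds in the intended application, where the density is $(1-\rho)\mathbf{u}$ with $1-\rho$ continuous on $\R^3$ and vanishing outside $\overline{D}$, but it is not implied by the hypothesis $\varphi\in[C^{0,\gamma}(D)]^{3}$ as literally stated; otherwise one only obtains $C^{1,\gamma}(B_R)$ globally together with $C^{2,\gamma}$ regularity separately in $D$ and in $B_R\setminus\overline{D}$. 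This is a defect of the statement rather than of your argument, but a complete write-up should either add that hypothesis or weaken the conclusion accordingly.
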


By using Lax's Theorem\cite[Theorem 3.5]{colton.kress}, one can deduce a mapping property for the volume potential in Sobolev spaces based on the result of Theorem \ref{TH3.2}.

\begin{lemma}\label{volume-potential}
The volume potential defines a bounded operator $ V: [L^2(D)]^{3}\rightarrow [H^2(B_R)]^{3}$.
\end{lemma}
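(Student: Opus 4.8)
The plan is to follow the route indicated just before the statement: combine the Hölder mapping property of Theorem \ref{TH3.2} with Lax's theorem \cite[Theorem 3.5]{colton.kress}. Since $\|V\varphi\|_{H^2(B_R)}^2=\sum_{|\alpha|\le 2}\|\partial^\alpha V\varphi\|_{L^2(B_R)}^2$ and $[C^{0,\gamma}(\overline D)]^3$ is dense in $[L^2(D)]^3$, it suffices to prove a bound of the form $\|\partial^\alpha V\varphi\|_{L^2(B_R)}\le C\|\varphi\|_{L^2(D)}$ for every multi-index $\alpha$ with $|\alpha|\le 2$ and every $\varphi\in[C^{0,\gamma}(\overline D)]^3$, with $C$ independent of $\varphi$; the operator $V$ then extends by continuity to all of $[L^2(D)]^3$. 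I would split the argument according to the order $|\alpha|$.

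For $|\alpha|\le 1$ this is elementary and uses only the singularity of the kernel. By \eqref{Asymptotic} one has $\Pi(x,y)=\mathcal O(|x-y|^{-1})$, hence $\partial_x^\alpha\Pi(x,y)=\mathcal O(|x-y|^{-1-|\alpha|})$, which for $|\alpha|\le 1$ is still weakly singular, i.e. locally integrable in $\mathbb R^3$. A Schur test based on $\sup_{x}\int_{D}|x-y|^{-1-|\alpha|}\dif y<\infty$ and $\sup_{y}\int_{B_R}|x-y|^{-1-|\alpha|}\dif x<\infty$ then gives directly that $\partial^\alpha V:[L^2(D)]^3\to[L^2(B_R)]^3$ is bounded for $|\alpha|\le1$, without any appeal to Theorem \ref{TH3.2}.

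The essential case is $|\alpha|=2$, where the kernel $\partial_{x_i}\partial_{x_j}\Pi(x,y)=\mathcal O(|x-y|^{-3})$ is strongly (Calderón--Zygmund) singular and no longer integrable, so the bound cannot be read off the kernel. Here I would invoke Lax's theorem. Set $A_{ij}:=\partial_{x_i}\partial_{x_j}V$, defined initially on the dense subspace $[C^{0,\gamma}(\overline D)]^3$; the proof of Theorem \ref{TH3.2} in fact yields the interior Schauder bound $\|V\varphi\|_{C^{2,\gamma}}\le C\|\varphi\|_{C^{0,\gamma}}$ (away from $\partial D$ the kernel is smooth), so that $A_{ij}:[C^{0,\gamma}(\overline D)]^3\to[C^{0,\gamma}(\overline{B_R})]^3$ is bounded in the Hölder norms. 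To supply the second operator Lax's theorem requires, I would compute the $L^2$-adjoint: its kernel is $\overline{\partial_{x_i}\partial_{x_j}\Pi(x,y)}^{\top}$, and using the symmetry $\Pi(x,y)^{\top}=\Pi(y,x)$ together with $\partial_{x}=-\partial_{y}$ (the kernel depends only on $x-y$, and two derivatives give a $+$ sign) one finds $A_{ij}^{*}\psi(y)=\partial_{y_i}\partial_{y_j}\int_{B_R}\overline{\Pi(y,x)}\,\psi(x)\dif x$. Thus $A_{ij}^{*}$ is again a second-derivative volume potential, now over $B_R$ and with the conjugated kernel (of the same $|x-y|^{-1}$ singularity), so the same Schauder bound shows $A_{ij}^{*}:[C^{0,\gamma}(\overline{B_R})]^3\to[C^{0,\gamma}(\overline D)]^3$ is bounded. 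Taking $X=[C^{0,\gamma}(\overline D)]^3$ and $Y=[C^{0,\gamma}(\overline{B_R})]^3$ with their Hölder norms as the given norms and the $L^2$ inner products as the scalar products (which are dominated by the Hölder norms and whose completions are $[L^2(D)]^3$ and $[L^2(B_R)]^3$), Lax's theorem yields $\|A_{ij}\varphi\|_{L^2(B_R)}\le\sqrt{\|A_{ij}\|\,\|A_{ij}^{*}\|}\,\|\varphi\|_{L^2(D)}$, which is exactly the missing bound for $|\alpha|=2$.

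The main obstacle is precisely this top-order step: the second-derivative kernel sits at the Calderón--Zygmund borderline, so $A_{ij}$ cannot be estimated on $[L^2]^3$ by absolute convergence and must instead be controlled through its Hölder-space behaviour. Two points need care: (i) upgrading Theorem \ref{TH3.2} from $C^2$ to the interior $C^{2,\gamma}$ (Schauder) bound, which is what makes both $A_{ij}$ and $A_{ij}^{*}$ bounded in the Hölder norm rather than merely into $C$, so that the two hypotheses of Lax's theorem hold symmetrically; and (ii) justifying the adjoint identity $(A_{ij}\varphi,\psi)_{L^2(B_R)}=(\varphi,A_{ij}^{*}\psi)_{L^2(D)}$ rigorously, i.e. the Fubini/symmetry manipulation for the singular kernel. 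Once these are in place, Lax's theorem performs the transfer from the Hölder setting to the $L^2$ setting automatically, and summing the resulting bounds over $|\alpha|\le2$ completes the proof.
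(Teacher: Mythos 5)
Your proposal is correct and follows essentially the same route the paper intends: the paper deduces Lemma \ref{volume-potential} precisely by combining the H\"older mapping property of Theorem \ref{TH3.2} with Lax's theorem \cite[Theorem 3.5]{colton.kress}, exactly as you do for the top-order derivatives (your elementary Schur-test treatment of the orders $|\alpha|\le 1$ is a harmless variant). Your two flagged points of care --- upgrading the $C^2$ statement to the interior $C^{2,\gamma}$ bound and justifying the adjoint identity for the singular kernel --- are genuine but standard, and are handled the same way in the acoustic prototype \cite[Theorem 8.2]{colton.kress}.
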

The Lippmann-Schwinger equation is defined as
\begin{equation}
\mathbf{u}(x)=\mathbf{u}^{{\rm in}}(x)-\omega^2\int_{\mathbb{R}^3 }(1-\rho(y))\Pi(x,y)\mathbf{u}(y)\dif y,\ \ x\in  \mathbb{R}^3 .
 \end{equation}

In was proved in \cite[Lemma 5.7]{peter1998On} that a solution of the above Lippmann-Schwinger integral equation is a solution to the scattering problem (\ref{LS})-(\ref{rd}) and vice versa. Then for $\rho\in [L^{\infty}(\mathbb{R}^{3})]^{3}$ satisfying $\rm{supp} (1-\rho)=\overline{D}$, it's easy to show that the scattering problem (\ref{LS})--(\ref{boundary}) has a unique solution $\mathbf{u}^{sc}\in  [H_{{\rm loc}}^{2}(\mathbb{R}^{3})]^{3}.$

\subsection{Uniqueness for inverse elastic scattering}
Given an incident plane wave $ \mathbf{u}^{{\rm in}}(x,d,q) $, we will use the notations $ \mathbf{u}^{{\rm sc}}(x,d,q)$, $ u(x,d,q)$ and $ \mathbf{u}^{\infty}(\hat{x},d,q)$ to
indicate dependence of the scattered field, of the total field and of the far field pattern on the incident direction $d$ and polarization $q$, respectively.  For an incoming point source $\mathbf{w}^{{\rm in}}(x,y,a)=\Pi(x,y)\cdot a$ located at $y\in D^c $ with the unit vector $a\in \C^3$, we denote the scattered field by $ \mathbf{w}^{{\rm sc}}(x,y,a)$, the total field by $ \mathbf{w}(x,y,a)$, and the far field pattern of the scattered field by $ \mathbf{w}^{\infty}(\hat{x},y,a)$. We refer to  \cite[Theorem 2.1]{xiang} for a mixed reciprocity relation that is analogous to the acoustic case as follows.
\begin{lemma}
For any $y\in D^c $ and a fixed unit vector $a$, we have
\begin{equation}
q\cdot \mathbf{w}^{\infty}(-d,y,a)+ d\cdot \mathbf{w}^{\infty}(-d,y,a)=a\cdot \mathbf{u}^{{\rm sc}}(y,d,q)
\end{equation}
for all $ d,q\in\mathbb{S}^2, d\cdot q=0. $
\end{lemma}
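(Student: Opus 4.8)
The plan is to derive this mixed reciprocity relation from the second Betti (Green) identity for the Lamé operator together with the Somigliana representation formula, mirroring the proof of the acoustic mixed reciprocity relation in \cite[Theorem 3.24]{colton.kress}. Write $\mathbf{u}=\mathbf{u}^{{\rm in}}+\mathbf{u}^{{\rm sc}}(\cdot,d,q)$ for the total field generated by the incident plane wave and $\mathbf{w}=\mathbf{w}^{{\rm in}}+\mathbf{w}^{{\rm sc}}(\cdot,y,a)$ for the total field generated by the point source $\mathbf{w}^{{\rm in}}=\Pi(\cdot,y)\cdot a$. A direct computation using the Lippmann--Schwinger equation and the defining property of $\Pi$ shows that $\mathbf{u}$ solves $\Delta^*\mathbf{u}+\omega^2\rho\,\mathbf{u}=0$ in $\mathbb{R}^3$, while $\mathbf{w}$ solves $\Delta^*\mathbf{w}+\omega^2\rho\,\mathbf{w}=-\delta_y\,a$; crucially, both carry the \emph{same} density $\rho$. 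I would then apply the second Betti identity to the pair $(\mathbf{u},\mathbf{w})$ over the region $B_R\setminus\overline{B_\epsilon(y)}$, where $B_R$ is a large ball containing $\overline{D}$ and $y$, and $B_\epsilon(y)$ excises the singularity of $\mathbf{w}$ at $y$.

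Away from $y$ the volume integrand is $\mathbf{u}\cdot\Delta^*\mathbf{w}-\mathbf{w}\cdot\Delta^*\mathbf{u}=-\omega^2\rho\,\mathbf{u}\cdot\mathbf{w}+\omega^2\rho\,\mathbf{w}\cdot\mathbf{u}=0$, so the identity reduces to a balance of the surface terms on $\partial B_R$ and $\partial B_\epsilon(y)$. The transmission conditions \eqref{boundary}, namely continuity of $\mathbf{u},\mathbf{w}$ and of $T\mathbf{u},T\mathbf{w}$ across $\partial D$, guarantee that no spurious interface contribution over $\partial D$ appears, even though $\rho$ jumps there. Letting $\epsilon\to0$, the Somigliana identity for $\Pi$ extracts the point value, $\int_{\partial B_\epsilon(y)}(\mathbf{u}\cdot T\mathbf{w}-\mathbf{w}\cdot T\mathbf{u})\,ds\to a\cdot\mathbf{u}(y)$, whence $\int_{\partial B_R}(\mathbf{u}\cdot T\mathbf{w}-\mathbf{w}\cdot T\mathbf{u})\,ds=-a\cdot\mathbf{u}(y)$ for every large $R$.

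It then remains to pass to the limit $R\to\infty$ on $\partial B_R$. Expanding $\mathbf{u}=\mathbf{u}^{{\rm in}}+\mathbf{u}^{{\rm sc}}$ and $\mathbf{w}=\mathbf{w}^{{\rm in}}+\mathbf{w}^{{\rm sc}}$ splits the integrand into four pairings. Since $\mathbf{u}^{{\rm sc}}$, $\mathbf{w}^{{\rm sc}}$ and $\mathbf{w}^{{\rm in}}=\Pi(\cdot,y)\cdot a$ are all Kupradze-radiating, the radiation condition forces the leading-order tractions of the $P$- and $S$-parts to be scalar multiples of the fields themselves; hence the antisymmetric combination cancels to leading order and every pairing of two radiating fields contributes $o(1)$ as $R\to\infty$. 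Applying the free-space reciprocity to the purely incident pairing $(\mathbf{u}^{{\rm in}},\mathbf{w}^{{\rm in}})$ shows its $\partial B_R$ contribution tends to $-a\cdot\mathbf{u}^{{\rm in}}(y)$; subtracting this from $-a\cdot\mathbf{u}(y)$ cancels the incident part and leaves $-a\cdot\mathbf{u}^{{\rm sc}}(y,d,q)$ as the limit of the single remaining plane-wave--scattered pairing $\int_{\partial B_R}(\mathbf{u}^{{\rm in}}\cdot T\mathbf{w}^{{\rm sc}}-\mathbf{w}^{{\rm sc}}\cdot T\mathbf{u}^{{\rm in}})\,ds$.

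I expect this last term to be the main obstacle. Its evaluation requires inserting the far-field asymptotics of $\mathbf{w}^{{\rm sc}}$ and $T\mathbf{w}^{{\rm sc}}$, performing a stationary-phase analysis that localizes the oscillatory integral on $\partial B_R$ to the observation direction $\hat{x}=-d$, and tracking how the longitudinal plane-wave component $d\,e^{ik_px\cdot d}$ selects the $P$-far-field through $d\cdot\mathbf{w}_p^\infty(-d,y,a)$ while the transversal component $q\,e^{ik_sx\cdot d}$ selects the $S$-far-field through $q\cdot\mathbf{w}_s^\infty(-d,y,a)$, the two wave numbers $k_p\neq k_s$ keeping the cross terms from mixing. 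Matching the multiplicative constants produced by the stationary-phase formula against the normalization of $\mathbf{u}_p^\infty,\mathbf{u}_s^\infty$ should yield $a\cdot\mathbf{u}^{{\rm sc}}(y,d,q)=q\cdot\mathbf{w}_s^\infty(-d,y,a)+d\cdot\mathbf{w}_p^\infty(-d,y,a)$, which, recalling $\mathbf{w}^\infty=\mathbf{w}_p^\infty+\mathbf{w}_s^\infty$, is the asserted relation.
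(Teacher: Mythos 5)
Your outline is essentially the standard proof: the paper itself offers no argument for this lemma, deferring entirely to \cite[Theorem 2.1]{xiang}, and the proof there is precisely the Betti/Somigliana computation you describe, except that the final $\partial B_R$ pairing of the incident plane wave with $\mathbf{w}^{{\rm sc}}$ is evaluated via the exact far-field representation formula for Kupradze-radiating fields rather than by a stationary-phase argument. Two points you leave implicit should be made explicit in a full write-up: the vanishing of the radiating--radiating pairings needs not only the cancellation of the diagonal $P$--$P$ and $S$--$S$ terms (your "scalar multiple" reasoning, where the traction multipliers $i(\lambda+2\mu)k_p$ and $i\mu k_s$ differ between the two parts) but also the pointwise orthogonality $\mathbf{w}_p^{\infty}(\hat{x})\parallel\hat{x}$ and $\mathbf{w}_s^{\infty}(\hat{x})\perp\hat{x}$ to kill the $P$--$S$ cross terms; and these same orthogonality relations together with $d\cdot q=0$ are exactly what convert your derived form $q\cdot\mathbf{w}_s^{\infty}(-d,y,a)+d\cdot\mathbf{w}_p^{\infty}(-d,y,a)$ into the stated $q\cdot\mathbf{w}^{\infty}(-d,y,a)+d\cdot\mathbf{w}^{\infty}(-d,y,a)$, which does not follow from $\mathbf{w}^{\infty}=\mathbf{w}_p^{\infty}+\mathbf{w}_s^{\infty}$ alone.
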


Now we state and prove the uniqueness result in recovering the shape of the inhomogeneous penetrable obstacle $D$ in linear elasticity.
\begin{theorem}\label{main2}
Suppose $\rho\in [L^{\infty}(\mathbb{R}^{3})]^{3}$ is a real-valued function such that $|1-\rho|\geq C $ for some  constant $C>0$ and for $x \in D$ lying in a neighboring area of $\partial D$. Then the measurement data $ \mathbf{u}^{\infty}(\hat{x},d,q)$ for all $\hat{x},d, q\in \mathbb{S}^2 $ can uniquely determine $\partial D$. Furthermore, if $\rho(x)\in [C^{0,\gamma}(\overline{D})]^{3}$ for some $\gamma\in(0,1)$, then the value of $\rho(x)$ on $\partial D$ can be also uniquely determined by the same measurement data.
\end{theorem}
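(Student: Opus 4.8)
The plan is to transplant the hyper-singular source argument behind Theorem \ref{thm:main} to the Lamé system, replacing the scalar fundamental solution $\Phi$ by the Kelvin tensor $\Pi^0$ and using the explicit singular expansions recorded in Lemma \ref{cal1}. I would argue by contradiction: suppose two penetrable scatterers $D\neq D'$ satisfy $\mathbf{u}^\infty(\hat x,d,q)=\mathbf{u}'^\infty(\hat x,d,q)$ for all admissible $\hat x,d,q$. Since the compressional part of the far field is radial and the shear part tangential, the two are separable, and applying Rellich's lemma to each Helmholtz component gives $\mathbf{u}^{\rm sc}=\mathbf{u}'^{\rm sc}$ in the unbounded component $G$ of $\mathbb{R}^3\setminus\overline{D\cup D'}$. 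The mixed reciprocity relation of the preceding lemma then converts plane-wave data into point-source data: its right-hand side $a\cdot\mathbf{u}^{\rm sc}(y,d,q)$ agrees with the primed one for $y\in G$, so varying $d,q,a$ forces $\mathbf{w}^\infty(\cdot,z,a)=\mathbf{w}'^\infty(\cdot,z,a)$ for the incoming source $\Pi(\cdot,z)\cdot a$, and a second application of Rellich yields $\mathbf{w}^{\rm sc}(x,z,a)=\mathbf{w}'^{\rm sc}(x,z,a)$, hence $\nabla\mathbf{w}^{\rm sc}(x,z,a)=\nabla\mathbf{w}'^{\rm sc}(x,z,a)$, for all $x,z\in G$ and every polarization $a$.

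Next I would fix $z^*\in\partial D\cap\partial G$ with $z^*\notin\overline{D'}$, set $z_j=z^*+\nu(z^*)/j$, and take the hyper-singular sources $\mathbf{w}_j^{\rm in}(x)=\Pi(x,z_j)\cdot a$ with a fixed polarization $a$ (the choice $a=\nu(z^*)$ is convenient). Writing the vectorial Lippmann--Schwinger equation $(I-K)\mathbf{w}_j=K\mathbf{w}_j^{\rm in}$ with $K\varphi=-\omega^2\int(1-\rho)\Pi(\cdot,y)\varphi(y)\,dy$, the mapping property of Lemma \ref{volume-potential} makes $I-K$ a Fredholm operator of index zero on $[L^2(\Omega)]^3$; triviality of its kernel (uniqueness of the forward problem) together with the embedding $H^2\hookrightarrow C$ gives, exactly as in \eqref{eq:uniform}, a uniform bound $\|\mathbf{w}_j^{\rm sc}\|_{C(\Omega)}\le C$. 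Here one uses that $\|\mathbf{w}_j^{\rm in}\|_{L^2(\Omega)}$ stays bounded because $|\Pi(\cdot,z_j)|\sim 1/r$ is square-integrable in three dimensions uniformly as $z_j\to z^*$.

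The heart of the matter is to manufacture from $\mathbf{w}_j^{\rm sc}$ a scalar that blows up. By analogy with $v_j=\nabla w_j\cdot\nu$ I would set $v_j(x):=\nu(z^*)^\top\,\nabla\mathbf{w}_j^{\rm sc}(x)\,\nu(z^*)$ and evaluate it at $x=z_j$ through the integral representation, which is finite for each $j$ since $z_j\notin\overline D$. Splitting the total field inside the integral as $\mathbf{w}_j^{\rm sc}+\mathbf{w}_j^{\rm in}$ and the domain as $(D\cap B_\delta(z^*))\cup(D\setminus B_\delta(z^*))$ decomposes $v_j(z_j)=I_1+I_2+I_3$. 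The terms $I_1$ (carrying the bounded $\mathbf{w}_j^{\rm sc}$, against a kernel $\nabla_x\Pi\sim 1/r^2$ that is still volume-integrable) and $I_3$ (integrating away from $z^*$) remain bounded in $j$, as in \eqref{eq:i3i4}. In $I_2$ the derivative $\nabla_x\Pi(z_j,y)$ paired with the source value $\mathbf{w}_j^{\rm in}(y)=\Pi(z_j,y)\cdot a$ produces, after inserting the primary part $\Pi^0$, a kernel of order $1/|z_j-y|^3$ whose leading coefficient is precisely the quantity expanded in parts (2)--(3) of Lemma \ref{cal1}. The decisive point is that this coefficient, contracted with the fixed directions, is a non-degenerate sign-definite multiple of $|z_j-y|^{-3}$ near $z^*$: the relevant combinations of $\alpha,\beta$ are strictly positive for physical Lamé constants (for instance $\alpha+\beta=1/(4\pi\mu)$, as in Lemma \ref{cal1}(3)), so no cancellation occurs and a computation parallel to \eqref{eq:i2} bounds $|I_2|$ below by a positive constant times $\int_{D\cap B_\delta(z^*)}|z_j-y|^{-3}\,dy$. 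Lemma \ref{integ-computation} then yields $|I_2|\ge c\,\ln(\delta j+1)+\mathcal{O}(1)$ with $c>0$, whence $|v_j(z_j)|\to\infty$, while $z^*\notin\overline{D'}$ forces $|v_j'(z_j)|\le C$ by well-posedness for $D'$; since the gradients of the two scattered fields coincide in $G$ we have $v_j(z_j)=v_j'(z_j)$, a contradiction proving $\partial D=\partial D'$.

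The step I expect to be the genuine obstacle is exactly this tensorial non-degeneracy in $I_2$: unlike the scalar case, the leading $1/r^3$ kernel carries the full angular and tensor structure of $\nabla_x\Pi^0$ contracted against $\Pi^0$, and one must verify that, after contracting with the fixed directions and integrating as $y$ sweeps the cone $D\cap B_\delta(z^*)$, a single-signed nonzero coefficient survives rather than averaging to zero. This is precisely what Lemma \ref{cal1}, together with the positivity of $\alpha$, $\beta$ and $\alpha+\beta$, is designed to certify. Finally the boundary value $\rho|_{\partial D}$ is recovered by the same device: assuming $\rho(z^*)\neq\rho'(z^*)$ with $\partial D=\partial D'$, subtracting the two representations cancels the bounded contributions and leaves $|I_2(\rho)-I_2(\rho')|$ bounded below by a positive multiple of $|\rho'(z^*)-\rho(z^*)|\int_{D\cap B_\delta(z^*)}|z_j-y|^{-3}\,dy$, which again diverges logarithmically and contradicts the boundedness of the right-hand side, forcing $\rho=\rho'$ on $\partial D$.
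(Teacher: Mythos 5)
Your proposal is correct and follows essentially the same route as the paper's proof: Rellich's lemma plus the mixed reciprocity relation to pass to point-source data, the hyper-singular sources $\Pi(\cdot,z_j)\cdot\nu(z^*)$ with the Lippmann--Schwinger/Fredholm uniform bound, the $I_1+I_2+I_3$ splitting, the non-degeneracy $\alpha+\beta=1/(4\pi\mu)$ from Lemma \ref{cal1} to force the logarithmic blow-up of $I_2$, and the same subtraction argument for $\rho|_{\partial D}$. No substantive differences to report.
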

\begin{proof}
Assume that  $(D,\rho)$ and $(\tilde{D},
\tilde{\rho})$ generate the same far field patterns, that is,
\begin{equation}
\mathbf{u}^{\infty}(\hat{x},d,q)=\tilde{\mathbf{u}}^{\infty}(\hat{x},d,q)\ \ \text{ for all } \hat{x}, d ,q\in \mathbb{S}^{2},\ d\cdot q=0.
\end{equation}
Then by using Rellich's lemma, we obtain
\begin{equation}
\mathbf{u}^{{\rm sc}}(x,d,q)=\tilde{\mathbf{u}}^{{\rm sc}}(x,d,q)\ \ \text{ for all } x\in G,\ d ,q\in \mathbb{S}^{2},\ d\cdot q=0,
\end{equation}
where $G$ is denoted as the unbounded connected domain of  $\mathbb{R}^{3}\backslash \overline{D \cup \tilde{D}}$. Applying the above mixed reciprocity relation theorem,
we can deduce that for any fixed vector $a$
\begin{equation}
\mathbf{w}^{\infty}(-d,x,a)=\tilde{\mathbf{w}}^{\infty}(-d,x,a)\ \ \text{ for all } x\in G, d \in \mathbb{S}^{2}.
\end{equation}
Using Rellich's lemma again, we get
\begin{equation}\label{equality}
\mathbf{w}^{{\rm sc}}(y,x,a)=\tilde{\mathbf{w}}^{{\rm sc}}(y,x,a)\ \ \text{ for all } x,y\in G.
\end{equation}	
	
If $D\neq \tilde{D}$, we suppose without loss of generality that there exists a point $ z^{*} $ satisfying $z^{*} \in \partial D\cap \partial G $ and $z^{*} \notin \tilde{D}$. As done in the acoustic case, we set
\begin{equation*}
z_{j} = z^{*} + \frac{\nu(z^{*})}{j},\ \ j=1,2,\cdots
\end{equation*}
where $\nu(z^{*})$ is the unit normal at $z^{*}$ directed into the exterior of $D$. Apparently, $z_{j}\in G $ for $j\geq j_0$. Next, we consider the scattering problems due to point sources $\mathbf{w}_j^{{\rm in}}(x,z_{j},\nu(z^{*}))=\Pi(x,z_{j})\cdot \nu(z^{*}) $. It's easy to obtain $w_j^{{\rm in}}\in [L_{{\rm loc}}^{2}(\mathbb{R}^{3})]^{3}$ from the singularity of $\Pi(x,y).$ The total field $\mathbf{w}(x,z_{j},\nu(z^{*}))$ can be given by the Lippmann-Schwinger equation\cite{peter1998On}, that is
\begin{equation}
\mathbf{w}(x,z_{j},\nu(z^{*})= w_j^{{\rm in}}(x,z_{j},\nu(z^{*})) -\omega^2\int_{\mathbb{R}^3 }(1-\rho(y))\Pi(x,y) \mathbf{w}\left(y,z_{j},\nu(z^{*})\right)\dif y,\ \ x\in  \mathbb{R}^3 .
\end{equation}
Then
\begin{equation}\label{sch-win}
\mathbf{w}_j^{{\rm sc}}(x,z_{j},\nu(z^{*}))=-\omega^2\int_{\mathbb{R}^3 }(1-\rho(y)) \Pi(x,y)\left(\mathbf{w}_j^{{\rm in}}(x,z_{j},\nu(z^{*}))+\mathbf{w}_j^{{\rm sc}}(x,z_{j},\nu(z^{*}))\right)\dif y,\ \ x\in  \mathbb{R}^3 .
\end{equation}
Define the volume potential $V:[L_{{\rm loc}}^{2}(\mathbb{R}^{3})]^{3} \to [H_{{\rm loc}}^{2}(\mathbb{R}^{3})]^{3}$
\begin{equation}
 (V\varphi)(x)=-\omega^2\int_{\mathbb{R}^3 }(1-\rho(y))\Pi(x,y)\varphi(y)\dif y,\ \ x\in  \mathbb{R}^3.
\end{equation}
 We can rewrite the equation (\ref{sch-win}) as
 \begin{equation}
 (I-V)\mathbf{w}_j^{{\rm sc}}(x,z_{j},\nu(z^{*}))=V\mathbf{w}_j^{{\rm in}}(x,z_{j},\nu(z^{*})), \ \ x\in  \mathbb{R}^3, j=1,2,\cdots .
\end{equation}
Together with the facts $\supp(1 - \rho)=\overline{D}\subset B_R$, $\rho(x)\in [L^{\infty}(\mathbb{R}^{3})]^{3}$ and Lemma \ref{volume-potential}, we get a similar  estimate to (\ref{eq:uniform}) as
\begin{equation} \label{uniform-el}
\Vert \mathbf{w}_j^{{\rm sc}}\Vert_{[C(B_R)]^{3}} \leqslant C_1 \Vert \mathbf{w}_j^{{\rm sc}}\Vert_{[H^{2}(B_R)]^{3}} \leqslant C_2 \Vert V \mathbf{w}_j^{{\rm in}}\Vert_{[H^{2}(B_R)]^{3}} \leqslant C_3 \Vert \mathbf{w}_j^{{\rm in}}\Vert_{[L^{2}(B_R)]^{3}} \leqslant C_4
\end{equation}
where $B_R$ is a ball with radius $R$ containing $D$ and $z^{*}$.

Define $\bm{\upsilon}_j(x):=\nabla_x[\mathbf{w}_j^{{\rm sc}}(x,z_{j},q)\cdot \nu(z^{*})]$. Then,
\begin{equation}\label{main-eq}
\begin{aligned}
\bm{\upsilon}_j(x)=-\omega^2\int_{D}(1-\rho(y))\nabla_x[\Pi(x,y)\cdot \nu(z^{*})]\left(\mathbf{w}_j^{{\rm in}}(y,z_{j},\nu(z^{*})) +\mathbf{w}_j^{{\rm sc}}(y,z_{j},\nu(z^{*}))\right)\dif y.
\end{aligned}
\end{equation}
We continue to calculate $\bm{\upsilon}_j(x)$ at $x=z_j$ by
\begin{equation}\label{main-eq-devide}
\begin{aligned}
 \bm{\upsilon}_j(z_j)
&=-\omega^2\int_{D }(1-\rho(y))\nabla_x[\Pi(x,y)\cdot \nu(z^{*})]|_{x=z_j}\mathbf{w}_j^{{\rm sc}}(y,z_{j},\nu(z^{*}))\dif y\\
&  -\omega^2\int_{{D\cap B_{\delta}(z^{*})}}(1-\rho(y))\nabla_x[\Pi(x,y)\cdot \nu(z^{*})]|_{x=z_j}\mathbf{w}_j^{{\rm in}}(y,z_{j},\nu(z^{*}))\dif y\\
&  -\omega^2\int_{{D\setminus B_{\delta}(z^{*})}}(1-\rho(y))\nabla_x[\Pi(x,y)\cdot \nu(z^{*})]|_{x=z_j}\mathbf{w}_j^{{\rm in}}(y,z_{j},\nu(z^{*}))\dif y\\
&:=\mathit{I}_1(\rho,\mathbf{w}_j^{{\rm sc}})+\mathit{I}_2(\rho)+\mathit{I}_3(\rho),\ \ x\in  \mathbb{R}^3 ,
\end{aligned}
\end{equation}
where $B_{\delta}(z^{*})$ is a small ball centered at $z^{*}$ with a small radius $\delta>0$.

Combining equation (\ref{Asymptotic}) and Lemma \ref{cal1} we conclude that $\nabla_x[\Pi(x,y)\cdot \nu(z^{*})]$ has a singularity of type $1/|x-y|^2$. This together with
the fact that $\mathbf{w}_j^{{\rm in}}\in [L_{{\rm loc}}^{2}(\mathbb{R}^{3})]^{3}$ and (\ref{uniform-el}) gives
\begin{equation}
|\nu(z^{*})\cdot\mathit{I}_1(\rho,\mathbf{w}_j^{{\rm sc}})|+|\nu(z^{*})\cdot\mathit{I}_3(\rho)|\leq M,
\end{equation}
where $M$ is a positive constant independent of $j\in \mathbb{N}.$

From (\ref{Asymptotic}) it follows that $\mathbf{w}_j^{{\rm in}}(y,z_{j},\nu(z^{*})) =\Pi^0(y,z_{j})\cdot \nu(z^{*})+ \mathcal{O}(1)$. Hence,
\begin{equation} \label{eqq1}
\begin{aligned}
\mathit{I}_2(\rho)= -\omega^2\int_{{D\cap B_{\delta}(z^{*})}}(1-\rho(y))\nabla_x[\Pi^0(x,y)\cdot \nu(z^{*})]|_{x=z_j}\cdot \left(\Pi^0(y,z_j)\cdot \nu(z^{*}) \right) \dif y + \mathcal{O}(1).
\end{aligned}
\end{equation}
Denote $\overline{z}=(z_j-y)/|(z_j-y)|$. Using Lemma \ref{cal1} one can show
\begin{equation} 
\begin{aligned}
&|\nu(z^{*})\cdot\mathit{I}_2(\rho)|\\
=&\left| -\omega^2\int_{{D\cap B_{\delta}(z^{*})}}(1-\rho(y))\left[\nabla_x[\Pi^0(x,y)\cdot \nu(z^{*})]|_{x=z_j}\cdot \left(\Pi^0(y,z_j)\cdot \nu(z^{*}) \right)\right]\cdot\nu(z^{*})  \dif y\right| + \mathcal{O}(1)\\
=&\omega^2\left|\int_{{D\cap B_{\delta}(z^{*})}}\frac{(1-\rho(y))}{|z_j-y|^3}\left[-\alpha^2(\overline{z}\cdot\nu(z^{*}))+\left(\alpha\beta-(3\alpha\beta+\beta^2)(\overline{z}\cdot\nu(z^{*}))^2\right)\overline{z}\cdot \nu(z^{*})\right]\dif y\right| + \mathcal{O}(1).
\end{aligned}
\end{equation}
Since $\overline{z}\cdot \nu(z^{*})\rightarrow 1$ as $j\rightarrow \infty$, it is easy to see
\begin{equation*}
\left[-\alpha^2(\overline{z}\cdot\nu(z^{*}))+\left(\alpha\beta-(3\alpha\beta+\beta^2)(\overline{z}\cdot\nu(z^{*}))^2\right)\overline{z}\cdot \nu(z^{*})\right]\rightarrow -\left(\alpha+\beta\right)^2,\ \ j\rightarrow \infty.
\end{equation*}
Recalling that $\alpha=\frac{\lambda+3\mu}{8\pi\mu(\lambda+2\mu)}$ and $ \beta=\frac{\lambda+\mu}{8\pi\mu(\lambda+2\mu)} $, we get $\alpha+\beta=\frac{1}{4\pi\mu}$. Then
\begin{equation} \label{constant}
\begin{aligned}
&|\nu(z^{*})\cdot\mathit{I}_2(\rho)|\\
\geq &C\omega^2\left|\int_{{D\cap B_{\delta}(z^{*})}}\frac{1}{|z_j-y|^3}\left[-\left(\alpha+\beta\right)^2\right]\dif y\right| + \mathcal{O}(1)\\
\geq &\frac{C\omega^2}{16\pi^2\mu^2} \left|\int_{{D\cap B_{\delta}(z^{*})}}\frac{1}{|z_j-y|^3}\dif y\right| + \mathcal{O}(1)\\
=&\frac{C\omega^2}{8\pi\mu^2} \ln j+ \mathcal{O}(1),\ \ j\rightarrow \infty.
\end{aligned}.
\end{equation}
Here $C$ means the lower bound of the contrast function $|1-\rho|$. This implies that $|\nu(z^{*})\cdot\bm{\upsilon}_j(z_j)|\rightarrow \infty$ as $j\rightarrow \infty $.

On the other hand, we can define $\tilde{\bm{\upsilon}}_j(x):=\nabla_x[\tilde{\mathbf{w}}_j^{{\rm sc}}(x,z_{j},\nu(z^{*})) \cdot \nu(z^{*})]$, where $\tilde{\mathbf{w}}_j^{{\rm sc}}(x,z_{j},\nu(z^{*}))$ is the scattered field corresponding to $\tilde{D}$ and the incident point source $ \mathbf{w}_j^{{\rm in}}(x,z_{j},\nu(z^{*})).$ Since $z^{*} \in \partial D$ and $z^{*} \notin \tilde{D}$, it follows from the well-posedness of the direct scattering problem that
\begin{equation}
\begin{aligned}
|\nu(z^{*})\cdot\nabla_x[\tilde{\mathbf{w}}_j^{{\rm sc}}(x,z_{j},\nu(z^{*}))\cdot \nu(z^{*})]|+|\nu(z^{*})\cdot\tilde{\mathbf{w}}_j^{{\rm sc}}(x,z_{j},\nu(z^{*}))\cdot \nu(z^{*})|\leq M, \ \ x\in B_\delta(z_j).
\end{aligned}
\end{equation}
Here $M$ is a constant and $B_\delta(z_j)$ is a ball centered at $z_j$ outside of $\tilde{D}.$ The above inequality implies that $|\nu(z^{*})\cdot\tilde{\bm{\upsilon}}_j(z_j)|\leq M $ as $j\rightarrow \infty $.
Recalling from the relation (\ref{equality}) that $\bm{\upsilon}_j(z_j)=\tilde{\bm{\upsilon}}_j(z_j)$ for all $j\in  \mathbb{N}$, we get a contradiction here, which finishes the proof of $D=\tilde{D}$.

Next, we are going to determine the value of $\rho(x)\in[C^{0,\gamma}(\overline{D})]^{3}$ on $\partial D$. Assume on the contrary that $\rho(z^*)\neq \tilde{\rho}(z^*)$ at some boundary point $z^*\in\partial D$. We assume futher that $|\rho(z^*)-\tilde{\rho}(z^*)|\geq c>0$ for $x\in D \cap B_{\delta}(z^{*})$ and with some $\delta>0$. Then choose a sequence of incoming point source waves $u_{j}^{{\rm in}}(x)$ with $z_{j}=z^{*}+\nu(z^{*})/{j}\in \mathbb{R}^{3} \backslash\overline{D}$ $(j = 1, 2, \cdots,)$. Following the above processes, we obtain that
\begin{equation*}
I_{1}(\rho,\mathbf{w}_j^{{\rm sc}}) + I_{2}(\rho) + I_{3}(\rho)=I_{1}(\tilde{\rho},\tilde{\mathbf{w}}_j^{{\rm sc}}) + I_{2}(\tilde{\rho}) + I_{3}(\tilde{\rho}),
\end{equation*}
that is,
\begin{equation} \label{eq2}
\nu(z^*)\cdot [I_{2}(\rho)-I_{2}(\tilde{\rho})]=\nu(z^*)\cdot [I_{1}(\tilde{\rho},\tilde{\mathbf{w}}_j^{{\rm sc}})-I_{1}(\rho,\mathbf{w}_j^{{\rm sc}}) +I_{3}(\tilde{\rho})-I_{3}(\rho)].
\end{equation}
The right hand of \eqref{eq2} can be bounded by
\begin{equation*}
|\nu(z^{*})\cdot I_{1}(\tilde{\rho},\tilde{\mathbf{w}}_j^{{\rm sc}})|+|\nu(z^{*})\cdot I_{1}(\rho,\mathbf{w}_j^{{\rm sc}})|+|\nu(z^{*})\cdot I_{3}(\tilde{\rho})|+|\nu(z^{*})\cdot I_{3}(\rho)|<M, \quad  \forall j\in\mathbb{N}.
\end{equation*}
While for the left hand of \eqref{eq2}, repeating the calculations in \eqref{eqq1}--\eqref{constant} one deduces that
\begin{equation*}
\begin{aligned}
&\vert \nu(z^{*})\cdot I_{2}(\rho)-\nu(z^{*})\cdot I_{2}(\tilde{\rho})\vert  \\
=&\left|-\omega^2\int_{{D\cap B_{\delta}(z^{*})}}(\tilde{\rho}(y)-\rho(y)) \nabla_x[\Pi^0(x,y)\cdot \nu(z^{*})]|_{x=z_j}\cdot \left(\Pi^0(y,z_j)\cdot \nu(z^{*}) \right) \dif y\right| + \mathcal{O}(1) \\
\geq & \frac{c\,\omega^2}{16\pi^2\mu^2} \left|\int_{{D\cap B_{\delta}(z^{*})}}\frac{1}{|z_j-y|^3}\dif y\right| + \mathcal{O}(1) =\frac{c\, \omega^2}{8\pi\mu^2} \ln j+ \mathcal{O}(1),
		\end{aligned}
	\end{equation*}
as $j \to \infty$, i.e. $\lim\limits_{j\to\infty} \vert \nu(z^{*})\cdot I_{2}(\rho)-\nu(z^{*})\cdot I_{2}(\tilde{\rho})\vert = \infty$. This contradiction implies that $\rho(x)=\tilde{\rho}(x)$ on $\partial D$.
\end{proof}

%

\section{Acknowledgments}

The work of G. Hu is partially supported by the National Natural Science Foundation of China (No. 12071236) and the Fundamental Research Funds for Central Universities in China (No. 63233071). The work of J. Xiang is supported by the Natural Science Foundation of China (No. 12301542), the Natural Science Foundation of Hubei (No. 2022CFB725) and the Natural Science Foundation of Yichang (No. A23-2-027).

\end{document}